\newtheorem{theorem}{Theorem}[section]
\newtheorem{lemma}[theorem]{Lemma}
\newtheorem{proposition}[theorem]{Proposition}
\newtheorem{corollary}[theorem]{Corollary}
\newtheorem{conjecture}[theorem]{Conjecture}
\newtheorem{remark}[theorem]{Remark}
\newtheorem{definition}[theorem]{Definition}
\newcounter{claim_nb}[theorem]
\newcommand\reref[1]{Remark~\ref{re:#1}}
\newcommand{\ca}{\alpha^\circ}
\newcommand{\myc}{\mathcal M}
\DeclareMathOperator{\girth}{\mathsf{girth}}
\renewcommand\le{\leqslant}
\renewcommand\ge{\geqslant}
\title{The circular altitude of a graph}
\author[Bamberg]{John Bamberg}
\author[Corr]{Brian Corr}
\author[Devillers]{Alice Devillers}
\author[Hawtin]{Daniel Hawtin}
\author[Pivotto]{Irene Pivotto}
\author[Swartz]{Eric Swartz}
\address[Bamberg, Devillers, Hawtin, Pivotto]{Centre for the Mathematics of Symmetry of Computation,
School of Mathematics and Statistics, University of Western Australia, Perth WA, Australia.}
\address[Corr]{Departamento de Matem\'atica, Instituto de Ci\^encias Exatas, Universidade Federal
de Minas Gerais, Av. Ant\^onio Carlos, 6627, 31270-901 Belo Horizonte, MG, Brazil.}
\address[Swartz]{Department of Mathematics, College of William and Mary, P.O. Box 8795, Williamsburg, VA 23187, U.S.A.}
\email{john.bamberg@uwa.edu.au, brian.p.corr@gmail.com, alice.devillers@uwa.edu.au, \newline daniel.hawtin@research.uwa.edu.au, irene.pivotto@uwa.edu.au, easwartz@wm.edu}
\begin{document}

\maketitle

\begin{abstract}
In this paper we investigate a parameter of graphs, called the circular altitude, introduced by Peter Cameron. We show that the circular altitude provides a lower bound on the circular chromatic number, and hence on the chromatic number, of a graph and investigate this parameter for the iterated Mycielskian of certain graphs. 
\end{abstract}

%---------------------------------------------------------
\section{Introduction}

The \emph{linear altitude} $\alpha(G)$ of an undirected and simple graph $G$ is the largest guaranteed length of a
monotonic path in any vertex-ordering. That is, it is the largest integer $k$ such that for any ordering of the 
vertices of $G$, there is a monotonic path with $k$ vertices.
It turns out that $\alpha(G)$ is equal to the chromatic number $\chi(G)$ of $G$: 
\begin{itemize}
\item If we have a $k$-colouring of $G$, then ordering the vertices according to their colour, and then vertices within the same colour class arbitrarily, produces a linear ordering where every monotonic path has length at most $k$, hence $\alpha(G)\le \chi(G)$;
\item Conversely, suppose we have a linear ordering of the vertices of $G$.
Then for every vertex $v$, assign a colour $c(v)$ by taking the length of the the largest increasing path ending at $v$. This
assignment produces a proper colouring, hence, $\alpha(G)\ge \chi(G)$.
\end{itemize}

Peter Cameron proposed an analogue of the linear altitude for circular orderings \cite{PJC}.
Given a circular ordering $\sigma$ of the vertices $V(G)$ of $G$ (that is, a one-to-one assignment of $V(G)$ to the vertices of a directed cycle), 
we say that a cycle $u_1,u_2,\ldots,u_n$ is {\em monotonic} for $\sigma$ if $u_1,\ldots,,u_n, u_1$ appear in this circular order in $\sigma$.
The {\em circular altitude} of a graph $G$, denoted $\ca(G)$, is defined as the largest $k$ such that every circular ordering of the vertices of $G$ contains a 
monotonic cycle of length at least $k$. For this definition we consider edges as monotonic cycles of length two, so the circular altitude is at least $2$ for any graph with at least one edge.

If $H$ is a subgraph of $G$, then any circular ordering of $V(G)$ induces a circular ordering of $V(H)$, so $\ca(G) \ge \ca(H)$.
In particular, $\ca(G) \ge \omega(G)$, the clique number of $G$.
Now suppose that $U_1, \ldots, U_s$ is a partition of $V(G)$ into stable sets (also known as \emph{cocliques} or \emph{independent sets}) and consider any circular ordering of $V(G)$ having first all the vertices in $U_1$, then all the vertices in $U_2$, and so on appearing in this order. Every monotonic cycle for this ordering may use at most one vertex for each of the $U_i$'s, so $\ca(G) \le s$. In particular, $\ca(G) \le \chi(G)$, the chromatic number of $G$.
If $\ca(G)$ is not $2$, then we also have $\ca(G) \ge \girth(G)$.
We wish to investigate which graphs achieve the above bounds, and whether there exist graphs $G$ satisfying $\omega(G)< \ca(G) < \chi(G)$.

Given a graph $G$, the vertex set of $\myc(G)$ is defined to be
\[ V(\myc(G))=\{w\} \cup \{a^u \ |\ a \in V(G)\} \cup \{a^v \ |\ a \in V(G)\}.\]
The induced subgraph on the vertices $\{a^v \ |\ a \in V(G)\}$ is a copy of $G$.
Vertex $w$ is adjacent to all vertices of the form $a^u$, and for every edge $ab$ in $G$ we add edges $a^ub^v$ and $b^ua^v$ to $\myc(G)$. For example, $\myc(K_2)=C_5$ and $\myc(C_5)$ is the Gr\"otzsch graph.
We denote by 
$\myc^{d}(G)$ the graph obtained from $G$ by iterating this process $d$ times (where $\myc^0(G)=G$).
This construction was defined by Jan Mycielski in~\cite{Mycielski:1955aa} and has the property that $\chi(\myc(G)) = \chi(G)+1$ and $\omega(\myc(G))=\omega(G)$. 
We are interested in graphs which satisfy $\omega(G)< \ca(G) < \chi(G)$, and Mycielskians provide promising examples. 
In fact, our first main result is the following.

\begin{theorem}\label{thm:oddgirth}
Suppose $G$ is nonempty with $\chi(\myc^r(G))=t$, where $t$ is odd, and the length of the shortest odd cycle of $G$ (i.e., its `odd girth') is strictly greater than $t$. Then 
$\ca(\myc^r(G))<t$.
\end{theorem}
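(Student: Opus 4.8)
The plan is to construct, explicitly, a single circular ordering $\sigma$ of $V(\myc^r(G))$ under which every monotonic cycle has fewer than $t$ vertices; since $\ca$ is by definition the largest $k$ such that \emph{every} circular ordering contains a monotonic cycle of length at least $k$, this gives $\ca(\myc^r(G)) \le t-1 < t$. Write $M_j = \myc^j(G)$, so $M_r = \myc(M_{r-1})$ and, in the notation of the Introduction, $V(M_j) = \{w_j\} \cup \{a^{u_j} : a \in V(M_{j-1})\} \cup \{a^{v_j} : a \in V(M_{j-1})\}$, where the $a^{v_j}$ span a copy of $M_{j-1}$, the $a^{u_j}$ form a coclique, and $w_j$ is adjacent exactly to that coclique. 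Fix a proper colouring of $G$ with $\chi(G)$ colours and let $\sigma_0$ list $V(G)$ one colour class at a time (ties broken arbitrarily); for $j \ge 1$ let $\sigma_j$ list $w_j$ first, then the $v_j$-copy of $M_{j-1}$ in the internal order $\sigma_{j-1}$, then the coclique $\{a^{u_j} : a \in V(M_{j-1})\}$ in the internal order $\sigma_{j-1}$. Set $\sigma = \sigma_r$.

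First I would record two structural facts about each $\sigma_j$. Because $w_j$ is listed first and its neighbourhood is precisely the coclique $\{a^{u_j}\}$, which is listed last, a monotonic path for $\sigma_j$ that contains $w_j$ has at most two vertices; and because that coclique is listed last, every monotonic path for $\sigma_j$ contains at most one of its vertices, necessarily as its last vertex. Consequently a monotonic path for $\sigma_j$ either has at most two vertices, or is a monotonic path for $\sigma_{j-1}$ inside the $v_j$-copy with at most one vertex $a^{u_j}$ appended at the end; by induction on $j$, every monotonic path for $\sigma_j$ has at most $\chi(M_j) = \chi(G)+j$ vertices, so every monotonic cycle for $\sigma = \sigma_r$ has length at most $t$.

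It then remains to exclude a monotonic cycle $C$ for $\sigma$ of length exactly $t$. Such a $C$ cannot use $w_r$. If $C$ also misses the coclique $\{a^{u_r}\}$ it is a monotonic cycle for $\sigma_{r-1}$ inside the copy of $M_{r-1}$, so $|C| \le \chi(M_{r-1}) = t-1$, a contradiction; hence $C$ meets that coclique in a single vertex $z^{u_r}$. Cutting $C$ there turns it into a monotonic path $u_1,\dots,u_{t-1}$ for $\sigma_{r-1}$ in $M_{r-1}$ — of the maximal possible length $\chi(M_{r-1}) = t-1$ — together with a vertex $z \in V(M_{r-1})$ adjacent in $M_{r-1}$ to both $u_1$ and $u_{t-1}$; thus $W := z,u_1,\dots,u_{t-1},z$ is a closed walk of length $t$ in $M_{r-1}$. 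Unwinding the recursive definition of $\sigma_{r-1}$, the path $u_1,\dots,u_{t-1}$ is a colour-increasing path through the deepest copy of $G$ in $M_{r-1}$ followed by one vertex from the coclique introduced at the $i$-th Mycielskian step, for each $i=1,\dots,r-1$; moreover $u_1$ lies in that deepest copy of $G$, whose vertices have no neighbours among the ``apex'' vertices (the copies of $w_1,\dots,w_{r-1}$) of $M_{r-1}$, so $z$ is a non-apex vertex. Now there is a graph homomorphism from $M_{r-1}$ onto $G + K_{r-1}$ (the join of $G$ with a clique on $\{w_1,\dots,w_{r-1}\}$) obtained by forgetting all the Mycielskian ``layer'' data; it carries the non-apex vertices onto $V(G)$ and carries an edge between two non-apex vertices to an edge of $G$.

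If $W$ uses only non-apex vertices, its image is a closed walk of length $t$ in $G$; as $t$ is odd, $G$ then contains an odd cycle of length at most $t$, contradicting the hypothesis that the odd girth of $G$ exceeds $t$, and we are done. The main obstacle is the remaining case, in which some appended vertices of the maximal monotonic path $u_1,\dots,u_{t-1}$ (hence of $W$) are apex vertices. The idea is that each apex vertex, a copy of some $w_k$, is adjacent in $M_{r-1}$ to a non-apex vertex lying over \emph{every} vertex of $G$, because $w_k$ is adjacent to the whole coclique of $M_k$ which projects onto all of $V(G)$; combining this with a precise list of which vertices can occur along a maximal monotonic path for $\sigma_{r-1}$, one should be able to re-route $W$ to a closed walk through non-apex vertices only, of length at most $t$ and of the same (odd) parity, and then project as before. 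I expect the careful bookkeeping here — eliminating the apex vertices while controlling both length and parity, most likely by strengthening the inductive hypothesis so that it also handles a monotonic path together with an ``external'' path closing it into a cycle — to be where essentially all the difficulty of the theorem lies.
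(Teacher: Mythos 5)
Your overall strategy is the same as the paper's: exhibit one explicit circular ordering, show by induction on the Mycielski level that every monotonic path has at most $\chi$ vertices, and then argue that a monotonic cycle of length exactly $t$ would project to a closed walk of odd length at most $t$ in $G$, contradicting the odd-girth hypothesis. The inductive length bound and the reduction to a closed walk $z,u_1,\dots,u_{t-1},z$ in $\myc^{r-1}(G)$ are correct. But the proof is not complete, and the case you defer --- apex vertices (copies of some $w_k$) occurring inside the walk --- is exactly where the argument breaks, not mere bookkeeping. With your ordering this case genuinely arises: the coclique appended at level $k$ contains $w_{k-1}^{u_k\cdots}$, and the last vertex of a maximal monotonic path at level $k-1$ is a coclique vertex of $\myc^{k-1}(G)$, hence adjacent to $w_{k-1}$; so maximal monotonic paths really can pass through apex vertices. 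The proposed re-routing does not obviously go through: the two walk-neighbours of such an apex vertex project to two vertices $c,c'$ of $G$ that need not have a common neighbour (e.g.\ in a long odd cycle), so you cannot replace the apex by a single non-apex vertex; replacing it by a longer detour either flips the parity of the walk (one extra edge) or increases its length beyond $t$ (two extra edges), and an odd closed walk of length $t+2$ no longer contradicts odd girth $>t$. Strengthening the induction to control this would amount to redoing the hard part of the theorem.

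The paper sidesteps this entirely by choosing the ordering differently: in its ``powerful ordering'' \emph{all} vertices of the form $w_i^{[r-i]}$ are placed before \emph{all} vertices of the form $i^{[r]}$ with $i\in V(G)$ (properties P3--P4), so any monotonic path through a $w$-type vertex must start at one and, by \lref{uAfterPositioni} and \lref{wPaths}, has length at most $r+1<t$ (\lref{tPaths}). Hence a monotonic cycle of length $t$ consists only of vertices $i^{W}$ with $i\in V(G)$, and \lref{w_i}(i) projects it directly to a closed walk of odd length $t$ in $G$. If you want to salvage your construction, the cleanest fix is to replace your recursive ordering by one in which every apex vertex precedes every non-apex vertex (your inductive path-length bound survives this change), which is precisely the paper's powerful ordering.
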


As a corollary of Theorem~\ref{thm:oddgirth}, we obtain the following, where $C_{2n+1}$ denotes an odd cycle of length $2n+1$.

\begin{theorem}\label{thm:camyc}\leavevmode
If $n > r+1$, then $\omega(\myc^{2r}(C_{2n+1}))=2$,
$\ca(\myc^{2r}(C_{2n+1}))=2r+2$, and $\chi(\myc^{2r}(C_{2n+1}))=2r+3$.
\end{theorem}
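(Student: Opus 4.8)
\emph{Preliminaries and two of the three equalities.} Write $H_k := \myc^k(C_{2n+1})$; the theorem concerns $H_{2r}$, and I note that $n > r+1$ forces $n \ge 2$, so $C_{2n+1}$ is triangle-free. The clique and chromatic numbers need nothing new: the Mycielskian satisfies $\omega(\myc(H)) = \max\{\omega(H),2\}$ and $\chi(\myc(H)) = \chi(H)+1$ (Mycielski~\cite{Mycielski:1955aa}), and $\omega(C_{2n+1}) = 2$, $\chi(C_{2n+1}) = 3$, so iterating $2r$ times gives $\omega(H_{2r}) = 2$ and $\chi(H_{2r}) = 2r+3$. For the bound $\ca(H_{2r}) \le 2r+2$ I would simply apply Theorem~\ref{thm:oddgirth}, taking its base graph to be $C_{2n+1}$ and its number of iterations to be $2r$: the value $t := \chi(H_{2r}) = 2r+3$ is odd as the theorem demands, and the odd girth $2n+1$ of $C_{2n+1}$ strictly exceeds $t = 2r+3$ precisely because $n > r+1$, whence $\ca(H_{2r}) < 2r+3$, i.e.\ $\ca(H_{2r}) \le 2r+2$.

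\emph{The lower bound.} The remaining inequality $\ca(H_{2r}) \ge 2r+2$ carries the real content, and I would prove it by induction on $k$ in the stronger assertion $\ca(H_k) \ge k+2$; combined with the upper bound this pins down $\ca(H_{2r})$ and, for $r \ge 1$, exhibits a graph with $\omega < \ca < \chi$. The base case $\ca(H_0) = \ca(C_{2n+1}) \ge 2$ holds because $C_{2n+1}$ has an edge. For the inductive step it would suffice to prove an \emph{increase lemma}: for any non-bipartite graph $H$,
\[ \ca(\myc(H)) \ge \ca(H) + 1 . \]
Applying this to $H = H_k$ --- which is non-bipartite, since it contains $C_{2n+1}$ when $k = 0$ and contains $\myc(K_2) = C_5$ (as $K_2 \subseteq C_{2n+1}$, so $C_5 \subseteq \myc(C_{2n+1}) \subseteq H_k$) when $k \ge 1$ --- advances $\ca(H_k) \ge k+2$ to $\ca(H_{k+1}) \ge k+3$. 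Some hypothesis on $H$ is genuinely needed: $\ca(\myc(K_2)) = \ca(C_5) = 2 = \ca(K_2)$, so the bare inequality is false.

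\emph{The increase lemma, and the main obstacle.} The step I expect to be hardest is the increase lemma itself, which I would approach as follows. Fix a circular ordering $\sigma$ of $V(\myc(H)) = \{w\} \cup \{a^u\} \cup \{a^v\}$ and cut it immediately before $w$, obtaining a linear ordering $L$ in which $w$ is least. Restricting $L$ to the $v$-layer gives a linear ordering of a copy of $H$, so --- by the argument showing $\alpha(H) = \chi(H)$, which produces a monotonic path on at least $\chi(H)$ vertices in \emph{every} linear ordering --- there is an increasing path $b_1^v <_L \cdots <_L b_m^v$ with $b_i \sim_H b_{i+1}$ and $m \ge \chi(H) \ge \ca(H)$. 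Since $w$ is adjacent to every $u$-vertex and $a^u \sim b^v$ exactly when $ab \in E(H)$, the natural move is to close this path "around $w$", appending a $u$-vertex $x^u >_L b_m^v$ with $x \sim_H b_m$ and prepending a $u$-vertex $y^u <_L b_1^v$ with $y \sim_H b_1$, so that $b_1^v, b_2^v, \ldots, b_m^v, x^u, w, y^u$ is a cycle of $\myc(H)$ whose vertices occur in this circular order in $\sigma$. The delicate point --- and the place where non-bipartiteness of $H$ must be brought to bear --- is that such flanking $u$-vertices need not lie in the required positions of $L$; when this closure is blocked one must instead re-choose the $v$-layer path, or splice a short $u$-layer detour into an interior edge $b_j^v b_{j+1}^v$, using the extra cycles that an odd cycle of $H$ makes available. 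Organising this case analysis so that a monotonic cycle on at least $\ca(H)+1$ vertices, with no repeats, is always produced is the crux; the rest is bookkeeping with the definitions of $\myc$ and of monotonic cycles.
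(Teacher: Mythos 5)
Your computations of $\omega$ and $\chi$, and your upper bound $\ca(\myc^{2r}(C_{2n+1}))\le 2r+2$ via Theorem~\ref{thm:oddgirth} (checking that $t=2r+3$ is odd and that the odd girth $2n+1$ exceeds $t$ exactly when $n>r+1$), are correct and are exactly what the paper does. The problem is the lower bound, which is where all the content of the theorem lives, and there your argument has a genuine gap. The paper obtains $\ca\ge 2r+2$ from the Simonyi--Tardos Zig-zag Theorem (Theorem~\ref{thm:zigzag}): given any circular ordering, the colouring induced by an associated linear ordering admits a totally multicoloured $K_{r+2,r+1}$ with colours alternating between the sides, and the $2r+2$ vertices of smallest colour in this subgraph form a monotonic cycle. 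This is a topological input, not an elementary one, and your proposal replaces it with an ``increase lemma'' ($\ca(\myc(H))\ge\ca(H)+1$ for non-bipartite $H$) that you do not prove: you explicitly flag the case where the path in the $v$-layer cannot be closed around $w$ as ``the crux'' and leave it unresolved. Nothing in the construction forces a $u$-vertex $x^u$ with $x\sim b_m$ to lie after $b_m^v$ in $L$, nor a $y^u$ with $y\sim b_1$ to lie before $b_1^v$; an adversarial ordering can place every $u$-vertex so as to block the closure, and ``re-choose the path or splice a detour'' is not an argument.

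There is also strong evidence that no routine case analysis can close this gap. Your increase lemma, applied to the non-bipartite graph $H=\myc^2(K_6)$ with $\ca(H)=8$, would give $\ca(\myc^3(K_6))\ge 9$ and hence $\chi_c(\myc^3(K_6))=9$ --- which is precisely the paper's Conjecture and a prominent open problem in circular colouring that the authors could only support computationally. So the lemma, if true at all, is at least as hard as that open problem, and it certainly cannot be waved through as ``bookkeeping.'' More generally, by Theorem~\ref{th:caVScirc} your lemma would force $\chi_c(\myc(H))=\chi(\myc(H))$ whenever $H$ is non-bipartite with $\ca(H)=\chi(H)$, and iteratively for all further Mycielskians; that is a much stronger statement than anything known. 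To repair the proof you should abandon the induction on single Mycielski steps and invoke Theorem~\ref{thm:zigzag} directly, as the paper does.
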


Our second main result concerns the relation between the circular altitude and the \emph{circular chromatic number} of a graph.
The circular chromatic number $\chi_c(G)$ is often considered to be the most natural generalisation of the chromatic number of a graph $G$. Briefly, it
is the infimum of the magnitudes of the circular colourings of $G$ (see Section \ref{section:circchrom} for definitions) and it satisfies the bound
$\chi(G)-1< \chi_c(G)\le \chi(G)$  (see \cite[Theorem 1.1]{Zhu:2001aa}).
In this paper, we show that:

\begin{theorem}\label{th:caVScirc}
$\ca(G)\le\chi_c(G)$.
\end{theorem}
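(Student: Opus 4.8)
The plan is to convert a circular colouring of $G$ into a circular ordering of $V(G)$ that admits no long monotonic cycle. Recall (see Section~\ref{section:circchrom}) that a circular $(p,q)$-colouring of $G$ is a map $c\colon V(G)\to\{0,1,\dots,p-1\}$ with $q\le |c(x)-c(y)|\le p-q$ for every edge $xy$, and that $\chi_c(G)$ is the infimum of $p/q$ over all such colourings. Since $\ca(G)$ is a fixed integer, it suffices to show that $\ca(G)\le p/q$ whenever $G$ admits a circular $(p,q)$-colouring, and then take the infimum over all such colourings. (We may assume $G$ has an edge, the other case being trivial; then a proper colouring is already a circular $(\chi(G),1)$-colouring, so circular colourings exist.)

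So I would fix a circular $(p,q)$-colouring $c$ of $G$ and let $\sigma$ be the circular ordering obtained by placing the vertices of $G$ around a directed cycle in order of non-decreasing colour, breaking ties arbitrarily. Let $u_1,u_2,\dots,u_n$ be any monotonic cycle for $\sigma$. A monotonic cycle is specified only up to cyclic rotation of its vertex sequence, and the condition that $u_1,\dots,u_n,u_1$ appear in this circular order in $\sigma$ says precisely that, after a suitable cyclic rotation of the sequence, the positions of $u_1,\dots,u_n$ on the directed cycle occur in increasing order. Since $\sigma$ orders the vertices by colour, I may therefore assume $c(u_1)\le c(u_2)\le\dots\le c(u_n)$.

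Now the two defining inequalities of the colouring finish the job. For $1\le i\le n-1$ the vertices $u_i,u_{i+1}$ are adjacent, so $|c(u_{i+1})-c(u_i)|\ge q$; together with $c(u_{i+1})\ge c(u_i)$ this gives $c(u_{i+1})-c(u_i)\ge q$, and summing over $i$ yields $c(u_n)-c(u_1)\ge (n-1)q$. On the other hand $u_n,u_1$ are adjacent, so $|c(u_n)-c(u_1)|\le p-q$, which (again using $c(u_n)\ge c(u_1)$) gives $c(u_n)-c(u_1)\le p-q$. Combining, $(n-1)q\le p-q$, i.e.\ $n\le p/q$. Hence every monotonic cycle for $\sigma$ has length at most $p/q$, so $\ca(G)\le p/q$, and taking the infimum over all circular colourings gives $\ca(G)\le\chi_c(G)$.

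The only step requiring genuine care is the reduction in the second paragraph: one has to unwind the definition of "appearing in this circular order in $\sigma$" to justify that the colour sequence along a monotonic cycle can be made non-decreasing after a cyclic relabelling — equivalently, that a monotonic cycle winds around $\sigma$ exactly once. Everything after that point is the two-line computation above, so I expect no further difficulty.
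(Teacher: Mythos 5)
Your proof is correct and follows essentially the same route as the paper's: convert a $(p,q)$-colouring into a circular ordering sorted by colour, apply the definition of $\ca(G)$ to extract a monotonic cycle, and combine the two inequalities $q\le |c(x)-c(y)|\le p-q$ along the cycle to get $(n-1)q\le p-q$ before taking the infimum. Your extra care about rotating the cycle so the colours are non-decreasing (rather than asserting strict increase outright, as the paper does) is a slight refinement of the same argument, not a different one.
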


We should emphasise that the value of $\ca(G)$ is not dependent on other known lower bounds
for $\chi_c(G)$.

In the theory of circular colouring, one prominent open problem is the determination of $\chi_c(\myc^3(K_6))$, the circular
chromatic number of the third Mycielskian of the complete graph $K_6$.
The Zig-zag Theorem (discussed in Section~\ref{sec:myc}) implies that $\ca(\myc^3(K_5))=\ca(\myc^2(K_6))=8$ and so $8\le \ca(\myc^3(K_6))$. On the other hand, $\chi(\myc^3(K_6))=9$ and so
\[
8\le \ca(\myc^3(K_6))\le \chi_c(\myc^3(K_6))\le \chi(\myc^3(K_6))=9.
\]

After running trial computations by computer, we are confident in the following conjecture: 

\begin{conjecture}
$\ca(\myc^3(K_6))=9$ and hence $\chi_c(\myc^3(K_6))=9$.
\end{conjecture}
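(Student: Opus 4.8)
The plan is to reduce the conjecture to the single inequality $\ca(\myc^3(K_6)) \ge 9$. Since the introduction already records $\ca(G) \le \chi(G)$ for every graph and $\chi(\myc^3(K_6)) = 9$, establishing $\ca(\myc^3(K_6)) \ge 9$ forces $\ca(\myc^3(K_6)) = 9$; the chain $\ca(\myc^3(K_6)) \le \chi_c(\myc^3(K_6)) \le \chi(\myc^3(K_6))$ then squeezes $\chi_c(\myc^3(K_6)) = 9$, which is the ``hence'' half of the statement. So everything rests on proving that \emph{every} circular ordering of the $55$ vertices of $\myc^3(K_6)$ carries a monotonic cycle of length at least $9$, equivalently that no ordering has all of its monotonic cycles of length at most $8$.

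First I would try to upgrade the Zig-zag Theorem by one Mycielskian level. That theorem already yields, for every ordering, a monotonic $8$-cycle in $\myc^2(K_6)$ (and in $\myc^3(K_5)$) by threading a cycle that alternates between the $u$- and $v$-layers and passes through the apex. The aim would be an inductive ``gain-one-vertex'' lemma: given a monotonic cycle of length $t-1$ living inside the $v$-copy of $\myc^{d-1}(K_n)$ sitting within $\myc^d(K_n)$, reroute it through the new apex $w$ and a compatible $u$-layer vertex to lengthen it to a monotonic $t$-cycle, using that a copy $a^u$ has the same $v$-layer neighbourhood as $a^v$ and is also joined to $w$. The encouraging sign is that the obstruction of \tref{oddgirth} does not fire here: with $t=9$ odd but the odd girth of $K_6$ equal to $3$, which is not greater than $9$, nothing forces $\ca(\myc^3(K_6))<9$, so the target value is genuinely attainable in principle.

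The honest difficulty is that this induction appears to break down exactly at the passage from length $8$ to length $9$ — which is presumably why the statement is only a conjecture — because near-extremal orderings may concentrate the base-clique vertices so that the freshly inserted apex and $u$-layer vertices cannot be chained on monotonically. I would therefore pair the structural attempt with a certified computation: encode ``there exists a circular ordering of $V(\myc^3(K_6))$ whose every monotonic cycle has length $\le 8$'' as a constraint-satisfaction (or SAT) instance and show it is unsatisfiable. The crucial enabler is symmetry reduction, since $\mathrm{Aut}(\myc^3(K_6))$ contains the $S_6$ acting on the base clique together with the layer automorphisms coming from the Mycielskian construction; fixing orbit representatives for the positions of a few distinguished vertices and propagating the layer structure should cut the raw $\tfrac{1}{2}\cdot 54!$ orderings down dramatically. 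One can further restrict attention to a single well-chosen family of candidate zig-zag $9$-cycles, since only one monotonic long cycle per ordering must be exhibited.

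The main obstacle is the combinatorial explosion: $55$ vertices put naive enumeration hopelessly out of reach, so the whole argument stands or falls on whether symmetry together with forcing arguments (or a successful ``gain-one-vertex'' lemma) reduce the problem to a tractable finite check. Because a proof of $\ca(\myc^3(K_6)) = 9$ immediately yields $\chi_c(\myc^3(K_6)) = 9$ and thereby resolves a well-known open problem, I expect no short purely combinatorial argument to exist, and the realistic route is a symmetry-reduced exhaustive computation whose unsatisfiability certificate can be independently verified.
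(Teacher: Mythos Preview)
The statement you are attempting is labelled a \emph{Conjecture} in the paper, and the paper offers no proof of it; the authors merely say that ``after running trial computations by computer'' they are confident it holds. So there is no paper-side argument to compare against.

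Your proposal is not a proof either, and you essentially acknowledge this. The reduction you give is correct and matches the paper's own framing: since $\ca(G)\le\chi_c(G)\le\chi(G)$ and $\chi(\myc^3(K_6))=9$, it suffices to show $\ca(\myc^3(K_6))\ge 9$. But the two routes you sketch are both left open. The ``gain-one-vertex'' induction is, as you say, exactly where things break — and you give no argument that it can be repaired. The computational route is only described, not executed: you do not produce an unsatisfiability certificate, and the symmetry reduction you mention (an $S_6$ action together with ``layer automorphisms'') is not shown to bring the search within reach. In short, you have correctly identified what must be shown and why the known tools (the Zig-zag Theorem, the obstruction of Theorem~\ref{thm:oddgirth}) do not settle it, but the proposal contains no step that actually establishes $\ca(\myc^3(K_6))\ge 9$. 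That is the gap, and it is the same gap that leaves the statement a conjecture in the paper.
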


On closer inspection, it appears that the circular altitude is more than just a surface generalisation of the concept of linear ordering. Indeed, the connection with the circular chromatic number and the surprising link between circular altitude and the very powerful topological techniques of the Zig-zag Theorem (see Section~\ref{sec:myc}) demonstrate its utility.

%---------------------------------------------------------
\section{Basic definitions and notation}

Let $G$ be a graph. For vertices $a,b$ the notation $a \sim b$ means that vertices $a,b$ are adjacent in $G$, while we indicate the complement of $G$ as $\overline{G}$. The length of a path in $G$ will be taken to be the number of vertices in the path.

Given a linear ordering of $V(G)$ we may obtain a proper colouring of $V(G)$ as follows: for every vertex $v \in V(G)$, the colour of $v$ is the number of vertices in a longest monotonic path ending at $v$. We call such colouring the colouring {\em induced} by the linear ordering. The following result follows easily from this definition. 

\begin{remark}\label{re:lin-col}
If $c$ is the colouring of $V(G)$ induced by a linear ordering of $V(G)$ and $u$ is adjacent to $v$ in $G$, then $c(u) < c(v)$ if and only if $u$ precedes $v$ in the linear ordering.
\end{remark}

Consider a circular ordering $u_1,u_2,\ldots,u_n$ of $V(G)$; for every $u_i$, such circular ordering induces two linear orderings of $V(G)$ starting at $u_i$, one as $u_i < u_{i+1} < \cdots < u_n < u_1 < \cdots < u_{i-1}$ and the other as $u_i < u_{i-1} < \cdots < u_1< u_n < \cdots <u_{i+1}$. 
For our purposes it is irrelevant which one of the linear orderings we pick. The next result follows from \reref{lin-col}.

\begin{remark}
Consider a circular ordering of $V(G)$ and a linear ordering starting at $u \in V(G)$ induced by this circular ordering. Let $c$ be the colouring of $V(G)$ induced by the linear ordering and suppose that $C=\{u_1,\ldots,u_k\}$ is a cycle of $G$ with $c(u_1) < c(u_2) < \cdots < c(u_k)$. Then $C$ is a monotonic cycle for the initial circular ordering.
\end{remark}

\section{Connection between circular altitude and circular chromatic number}\label{section:circchrom}

The \textit{circular chromatic number} $\chi_c(G)$ of $G$ is the infimum over all real numbers $r$ such that there exists a map from $V(G)$ to a circle of circumference $1$ with the property that any two adjacent vertices map to points at distance at least $1/r$ apart along this circle.
An equivalent definition is given below in terms of circular colourings (see \cite[Section 2]{Zhu:2001aa}).

\begin{definition}
For positive integers $p$ and $q$ a colouring $c: V(G) \rightarrow \{1,\ldots,p\}$ of a graph $G$ is called a \textit{(p,q)-colouring} if for all adjacent vertices $u$ and $v$ one has $q \le |c(u) - c(v)| \le p-q.$  The \textit{circular chromatic number} of $G$ is defined as
$$\chi_c(G):= \inf\left\{\frac{p}{q}: \text{ there is a } (p,q)\text{-colouring of } G\right\}.$$ 
\end{definition}

It is known for every graph $G$ that $\chi(G) - 1 < \chi_c(G) \le \chi(G)$ (see \cite[Theorem 1.1]{Zhu:2001aa}).  Given the second definition/interpretation of $\chi_c(G)$, it seems like there should be a connection between the circular altitude $\ca(G)$ and the circular chromatic number $\chi_c(G)$.  Indeed, there is such a connection, as stated in Theorem~\ref{th:caVScirc}:
\textit{For every graph $G$, $\ca(G) \le \lfloor \chi_c(G) \rfloor$.}

\begin{proof}[Proof of Theorem~\ref{th:caVScirc}]
Let $\ca(G) = t$ and let $c:V(G) \rightarrow \{1,\ldots,p\}$ be any $(p,q)$-colouring of $G$.   By definition, for all $u \sim v$, $q \le |c(u) - c(v)| \le p - q.$  We convert this colouring into a circular ordering.  We start by creating a linear order, where we start with all vertices with colour 1 (in any order among themselves), then we place all vertices with colour 2, then all with colour 3, etc., until all vertices have been placed.  We turn this into a circular ordering by placing the very first vertex in our linear order and placing vertices clockwise.

Since $\alpha^o(G) = t$, for some integer $m \ge t$, there are vertices $v_1, v_2, \ldots, v_m$ such that $v_1 \sim v_2$, $v_2 \sim v_3$, \ldots, $v_{m-1} \sim v_m$, $v_m \sim v_1$, and $c(v_1) < c(v_2) < \ldots < c(v_m)$.  By the definition of $(p,q)$-colouring, for all $1 \le i \le m-1$ we have $c(v_{i+1}) - c(v_i) \ge q$, and so $c(v_m) - c(v_1) \ge (m-1)q \ge (t-1)q.$  On the other hand, by the definition of $(p,q)$-colouring, $c(v_m) - c(v_1) \le p-q$.  Hence $(t-1)q \le p - q,$ and so $\ca(G) = t \le p/q.$  This is true for any $(p,q)$-colouring, and so $\ca(G) \le \chi_c(G)$.  Since $\ca(G)$ is an integer, we get the desired result.  
\end{proof}

\begin{corollary}
If $\ca(G)>2$ then $\chi_c(G)\ge \girth(G)$.
\end{corollary}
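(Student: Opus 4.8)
The plan is to combine two facts already established in the excerpt: first, the observation from the introduction that if $\ca(G)\neq 2$ then $\ca(G)\ge\girth(G)$; and second, the inequality $\ca(G)\le\chi_c(G)$ from Theorem~\ref{th:caVScirc}. Since we are assuming $\ca(G)>2$, in particular $\ca(G)\neq 2$, so the first fact applies and gives $\girth(G)\le\ca(G)$. Chaining this with Theorem~\ref{th:caVScirc} yields $\girth(G)\le\ca(G)\le\chi_c(G)$, which is exactly the claimed inequality.

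In slightly more detail, I would first recall briefly why $\ca(G)\ge\girth(G)$ when $\ca(G)\neq 2$: any circular ordering of $V(G)$ must contain a monotonic cycle of length at least $\ca(G)\ge 3$, hence an actual cycle of $G$ of length at least $3$, which forces $G$ to contain a cycle, so $\girth(G)$ is finite; moreover every monotonic cycle is in particular a cycle of $G$, so its length is at least $\girth(G)$, but more to the point the shortest monotonic cycle guaranteed has length $\ca(G)$, and since $G$ has a cycle of length $\girth(G)$ one checks (as in the introduction's remark) that $\ca(G)$ cannot exceed... — actually the direction we need is the lower bound $\ca(G)\ge\girth(G)$, which holds because, as noted in the introduction, a graph with a cycle has $\ca\ge\girth$ unless $\ca=2$. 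I would simply cite that remark rather than reprove it.

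The only subtlety worth flagging is the floor function: Theorem~\ref{th:caVScirc} as proved actually gives $\ca(G)\le\lfloor\chi_c(G)\rfloor$, and $\girth(G)$ is an integer, so the inequality $\girth(G)\le\ca(G)\le\lfloor\chi_c(G)\rfloor\le\chi_c(G)$ is clean and no rounding issue arises. There is essentially no obstacle here; the corollary is a two-line deduction, and the main thing to get right is to make explicit that the hypothesis $\ca(G)>2$ is precisely what licenses the use of the $\ca(G)\ge\girth(G)$ bound.

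\begin{proof}
Since $\ca(G) > 2$, in particular $\ca(G) \neq 2$, and so by the discussion in the introduction $\ca(G) \ge \girth(G)$. Combining this with Theorem~\ref{th:caVScirc}, which gives $\ca(G) \le \chi_c(G)$, we obtain $\girth(G) \le \ca(G) \le \chi_c(G)$, as required.
\end{proof}
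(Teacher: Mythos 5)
Your proof is correct and matches the paper's intent exactly: the corollary is stated without proof precisely because it follows immediately from the introduction's observation that $\ca(G)\ne 2$ implies $\ca(G)\ge\girth(G)$, chained with Theorem~\ref{th:caVScirc}. The final two-line proof you give is clean; the exploratory digression in your middle paragraph is unnecessary but harmless.
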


\section{Criteria for $\chi_c(G) = \chi(G)$}

We have the following immediate corollary of Theorem~\ref{th:caVScirc}:

\begin{corollary}
\label{cor:circchrom}
If $\ca(G) = \chi(G)$, then $\chi_c(G) = \chi(G)$. 
\end{corollary}

Section 3 of \cite{Zhu:2001aa} is all about conditions for when $\chi_c(G) = \chi(G)$. 
In particular, one result in~\cite{Zhu:2001aa} is the following.

\begin{theorem}[Corollary 3.1 in \cite{Zhu:2001aa}]\label{disconnected}
If $\overline{G}$ is disconnected, then $\chi_c(G)=\chi(G)$.
\end{theorem}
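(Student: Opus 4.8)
The plan is to deduce this from the $(p,q)$-colouring description of $\chi_c$. Since $\chi_c(G)\le\chi(G)$ always holds, it suffices to prove $\chi_c(G)\ge\chi(G)$, i.e.\ that every $(p,q)$-colouring $c$ of $G$ satisfies $p/q\ge\chi(G)$. Because $\overline G$ is disconnected we may write $G=G_1\vee G_2$ (the join of two nonempty graphs), and then $\chi(G)=\chi(G_1)+\chi(G_2)$ by the usual additivity of $\chi$ over joins; so the target becomes $p/q\ge\chi(G_1)+\chi(G_2)$ for an arbitrary $(p,q)$-colouring $c$ of $G$.

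Fix such a $c$. Let $A=c(V(G_1))$ and $B=c(V(G_2))$, viewed as subsets of $\mathbb Z_p$ thought of as a circle of circumference $p$. Since every vertex of $G_1$ is adjacent to every vertex of $G_2$, any $a\in A$ and $b\in B$ satisfy $q\le|a-b|\le p-q$, i.e.\ they are at circular distance at least $q$; in particular $A\cap B=\varnothing$. Reading the points of $A\cup B$ around $\mathbb Z_p$ and grouping maximal runs from the same side produces blocks that necessarily alternate $A,B,A,B,\dots$; say there are $s$ $A$-blocks, of arc-extents $\alpha_1,\dots,\alpha_s$, and $s$ $B$-blocks of extents $\beta_1,\dots,\beta_s$, together with the $2s$ gaps between consecutive blocks. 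Each gap runs from a point of $A$ to a point of $B$, hence has length at least $q$, so $\sum_i\alpha_i+\sum_i\beta_i\le p-2sq$. After placing the seam of $\mathbb Z_p$ inside one of these gaps, no block wraps around, so each block is an honest integer interval.

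The key step is then the standard observation that a circular colouring confined to an arc is really a linear one. For $v\in V(G_1)$ with $c(v)$ in the $i$-th $A$-block, let $o_i(v)\in\{0,\dots,\alpha_i\}$ be the offset of $c(v)$ from the left endpoint of that block, and set $c_1(v)=(i,\lfloor o_i(v)/q\rfloor)$. If $u\sim v$ in $G_1$ lie in different blocks, $c_1$ separates them via the first coordinate; if they lie in the same block, then $|o_i(u)-o_i(v)|=|c(u)-c(v)|\ge q$, which forces $\lfloor o_i(u)/q\rfloor\ne\lfloor o_i(v)/q\rfloor$. Hence $c_1$ is a proper colouring of $G_1$ with at most $\sum_i(\lfloor\alpha_i/q\rfloor+1)\le s+\tfrac1q\sum_i\alpha_i$ colours, so $\chi(G_1)\le s+\tfrac1q\sum_i\alpha_i$, and symmetrically $\chi(G_2)\le s+\tfrac1q\sum_i\beta_i$. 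Adding these and using $\sum_i\alpha_i+\sum_i\beta_i\le p-2sq$ telescopes to $\chi(G_1)+\chi(G_2)\le 2s+\tfrac1q(p-2sq)=p/q$. As $c$ was arbitrary, $\chi(G)\le\chi_c(G)\le\chi(G)$.

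I do not expect a genuine obstacle here; the work is mostly careful bookkeeping — verifying that the blocks alternate (so there are equally many of each kind and exactly $2s$ gaps), that the seam can be chosen inside a gap so ``offset'' behaves like ordinary distance, and the elementary fact $|o-o'|\ge q\Rightarrow\lfloor o/q\rfloor\ne\lfloor o'/q\rfloor$. It is worth noting that the machinery of Sections~3–4 does not on its own give this result: by Corollary~\ref{cor:circchrom} it would suffice to show $\ca(G)=\chi(G)$, but $\ca$ can be strictly smaller than $\chi$ on a join (for instance one can choose a circular ordering of $C_5\vee K_2$ with no monotonic $5$-cycle), so the separation-of-colours argument above really is needed.
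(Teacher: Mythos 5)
Your proof is correct, and it takes a genuinely different route from the paper. The paper does not prove this statement itself: it is quoted from Zhu's survey, where it follows from the colour-class condition of Theorem~\ref{thm:chic} (every colour class is a stable set of $G$, hence cannot meet both sides of the join, so that theorem applies with $A$ equal to one side). The paper's own contribution is the stronger Proposition~\ref{prop:compdisconn}, which shows $\ca(G)=\chi(G)$ whenever $\overline{G}$ is disconnected and thereby re-derives the statement via Corollary~\ref{cor:circchrom}. You instead argue directly from an arbitrary $(p,q)$-colouring: the images $c(V(G_1))$ and $c(V(G_2))$ occupy alternating arcs separated by gaps of length at least $q$, and the ``linearisation'' of a circular colouring on each arc yields $p/q\ge\chi(G_1)+\chi(G_2)$ by a clean telescoping count. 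The bookkeeping checks out: the blocks alternate, so there are $2s$ gaps and the seam can be placed in one since $s\ge1$; within a block $|c(u)-c(v)|\le\alpha_i<p-q$, so the adjacency constraint really does force $|o_i(u)-o_i(v)|\ge q$; and $\lfloor\alpha_i/q\rfloor+1$ correctly bounds the colours per block. This is self-contained and more elementary than invoking Theorem~\ref{thm:chic}, at the cost of not generalising to that theorem's full hypothesis.

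One genuine error, though it sits outside the proof proper: your closing claim that $\ca$ can be strictly smaller than $\chi$ on a join, with $C_5\vee K_2$ as an alleged example, is false. Proposition~\ref{prop:compdisconn} shows $\ca(G)=\chi(G)$ for every $G$ with $\overline{G}$ disconnected, i.e.\ for every join; in particular every circular ordering of $C_5\vee K_2$ does contain a monotonic $5$-cycle (cut the circle between a $K_2$-vertex and a $C_5$-vertex, merge a monotonic $3$-path on the $C_5$ side with a monotonic $2$-path on the $K_2$ side --- consecutive vertices of the merged sequence are adjacent because the two sides are completely joined --- and close the cycle using an endpoint of the linear order if necessary). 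So the circular-altitude machinery of the paper does recover this theorem; your direct argument is an alternative, not a necessity.
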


We extend the result of Theorem \ref{disconnected} to the circular altitude.

\begin{proposition}
\label{prop:compdisconn}
If $\overline{G}$ is disconnected,  then $\ca(G) = \chi(G)$. 
\end{proposition}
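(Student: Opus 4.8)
The plan is to exploit the structure of graphs whose complement is disconnected. If $\overline{G}$ is disconnected, then $V(G)$ splits as $V(G) = A \sqcup B$ with $A, B$ nonempty and every vertex of $A$ adjacent in $G$ to every vertex of $B$; that is, $G = G[A] + G[B]$ is a join. Write $G_A = G[A]$ and $G_B = G[B]$. The key numerical fact is that for a join, $\chi(G) = \chi(G_A) + \chi(G_B)$, since any proper colouring must use disjoint colour sets on the two parts. So it suffices to show $\ca(G) \ge \chi(G_A) + \chi(G_B)$; the reverse inequality $\ca(G) \le \chi(G)$ is already established in the introduction.

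For the lower bound on $\ca(G)$, I would take an arbitrary circular ordering $\sigma$ of $V(G)$ and produce a monotonic cycle of length at least $\chi(G_A) + \chi(G_B)$. First, induce a linear ordering from $\sigma$ (starting anywhere), and let $c$ be the colouring of $V(G)$ it induces in the sense of \reref{lin-col}. Restricting the linear order to $A$ gives a linear order of $V(G_A)$, whose induced colouring $c_A$ satisfies $c_A \le \chi(G_A)$ by the linear-altitude argument from the introduction; moreover, because the path witnessing the $c_A$-value of a vertex can be extended/interpreted inside $G$, there is a monotonic (for $\sigma$) path $P_A$ in $G_A$ with $\chi(G_A)$ vertices. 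Symmetrically there is a monotonic path $P_B$ in $G_B$ with $\chi(G_B)$ vertices. The idea is to splice $P_A$ and $P_B$ together into a single monotonic cycle using the complete bipartite connections between $A$ and $B$.

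The splicing step is where the care is needed, and it is the main obstacle. A monotonic path $P_A = (x_1, \ldots, x_a)$ occupies an arc of $\sigma$, as does $P_B = (y_1, \ldots, y_b)$, but these two arcs may interleave in $\sigma$ rather than being disjoint. To handle this, instead of taking arbitrary longest monotonic paths I would choose them more carefully: fix a point on the circle $\sigma$ to "cut" it into a linear order $<$, let $c_A, c_B$ be the induced colourings of $G_A, G_B$ for this order, pick a vertex $x \in A$ with $c_A(x) = \chi(G_A)$ that is \emph{as late as possible} in $<$, and a vertex $y \in B$ with $c_B(y) = \chi(G_B)$ that is as late as possible; without loss of generality $x < y$. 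Take a monotonic path $P_A$ in $G_A$ ending at $x$ with $\chi(G_A)$ vertices (all lying at or before $x$), and a monotonic path $P_B$ in $G_B$ ending at $y$ with $\chi(G_B)$ vertices. Now one shows that $P_A$ followed by $P_B$, together with the two cross edges $x y_1$ (from the last vertex of $P_A$ to the first of $P_B$) and $y x_1$ (from the last vertex of $P_B$ back to the first of $P_A$) — which exist since $G$ is a join — forms a cycle; and by the choice of "latest" endpoints and a short interleaving argument, the vertices of this cycle appear in $\sigma$ in the required cyclic order, so it is monotonic. Its length is $\chi(G_A) + \chi(G_B) = \chi(G)$, completing the proof. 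The delicate point to nail down in the write-up is exactly the claim that the concatenated vertex sequence respects the circular order of $\sigma$; this is precisely where the "choose endpoints as late as possible" device does its work, and I would isolate it as a small claim with its own short argument.
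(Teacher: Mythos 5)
Your setup coincides with the paper's: since $\overline{G}$ is disconnected, $G$ is the join of $G[A]$ and $G[B]$, so $\chi(G)=\chi(G[A])+\chi(G[B])=r+s$, and the task reduces to producing, in every circular ordering $\sigma$, a monotonic cycle of length at least $r+s$ by splicing a monotonic path $P_A$ in $A$ with a monotonic path $P_B$ in $B$. The gap is in the splicing device. Concatenating $P_A=(x_1,\dots,x_r)$ followed by $P_B=(y_1,\dots,y_s)$ yields a monotonic cycle only if every vertex of $P_B$ comes after $x_r$ in the linear order; otherwise the cyclic sequence $x_1,\dots,x_r,y_1,\dots,y_s$ has two ``descents'' and cannot agree with $\sigma$. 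Choosing the endpoints $x$ and $y$ as late as possible does not force this. Concretely, take $G=K_4$ with $A=\{a_1,a_2\}$, $B=\{b_1,b_2\}$ and circular order $b_1,a_1,a_2,b_2$. Cutting at $b_1$ gives $b_1<a_1<a_2<b_2$; the only choices are $P_A=(a_1,a_2)$ and $P_B=(b_1,b_2)$, with $x=a_2<y=b_2$ both ``latest'', yet the cyclic sequence $a_1,a_2,b_1,b_2$ (in either orientation) does not match the order $b_1,a_1,a_2,b_2$ in which $\sigma$ visits these vertices. So the ``small claim'' you defer is false as stated, and no alternative choice of $P_B$ ending at $y$ repairs it in this example.

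The paper fixes this in two ways. First, the cut point is not arbitrary: the circle is cut between consecutive vertices $b\in B$ and $a\in A$ with $b$ immediately preceding $a$, so the linear order begins at $a$ and ends at $b$. Second, and crucially, the two paths are \emph{merged} rather than concatenated: list the $r+s$ vertices of $P_A\cup P_B$ in increasing linear order. Any two consecutive vertices of this merged list are adjacent --- either they are consecutive on the same path, or one lies in $A$ and the other in $B$ --- so the merged list is automatically a monotonic path, and interleaving is no longer an obstacle. If its two endpoints lie in different parts they are adjacent and close into a monotonic cycle of length $r+s$; if both lie in $A$ (resp.\ $B$) one appends $b$ (resp.\ prepends $a$), which is adjacent to both endpoints and lies at the extreme of the linear order, giving a monotonic cycle of length $r+s+1$. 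Replacing your concatenation step with this cut-and-merge argument makes the proof go through; in the $K_4$ example above it produces precisely the monotonic cycle $b_1,a_1,a_2,b_2$ that your construction misses.
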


\begin{proof}
Let $A$ be the vertex set of a connected component of $\overline{G}$, and let $B=V(G)\setminus A$. We adopt the notation $G[A]$ for the induced subgraph of $G$ with vertex set $A.$  Note that since $A$ is a connected component of $\overline{G}$, for all vertices $u \in A$ and $v \in B$, $u \sim v.$  From this, it is immediate that if $\chi(G[A]) = r$ and $\chi(G[B]) = s$, then $\chi(G) = r+s.$

Now assume that we are given a circular ordering of the vertices of $G.$  Choose two vertices $a,b$ in the ordering that occur consecutively such that $a \in A$, $b \in B$, and $b$ comes before $a$ in the ordering if moving in the clockwise direction.  We now induce a linear ordering on the vertices by starting with the vertex $a$ and continuing as in the circular ordering, ending at $b$.  Note that since $\chi(G[A]) = r$ and $\chi(G[B]) = s$, we have $\alpha(G[A]) = r$ and $\alpha(G[B]) = s$, and there exists a monotonic path $P_{A}$ (respectively $P_{B}$) that uses only vertices from $A$ (respectively $B$) of length at least $r$ (respectively at least $s$).  We now combine these two paths together.  Since every vertex in $A$ is adjacent to every vertex in $B$, this creates one long monotonic path $P$ of length at least $r+s$.  If $P$ begins with a vertex of $A$ and ends with a vertex of $B$ (or vice versa), then $P$ is actually a cycle of length at least $r+s$ in the ordering, and we are done.  Otherwise, $P$ begins and ends with two vertices either from $A$ or from $B$.  If it is two vertices from $A$, then these are both adjacent to the vertex $b$ at the end of the ordering and we have a cycle of length at least $r+s+1$.  If it is two vertices from $B$, then these are both adjacent to the vertex $a$ at the beginning of the ordering and we have a cycle of length at least $r+s+1$.  In any case, this circular ordering has a monotonic cycle of length at least $r+s$.  Since $r+s$ is the chromatic number for $G$ (and upper bound for $\ca(G)$), $\ca(G) = \chi(G)$, as desired.
\end{proof}

The above \cite[Corollary 3.1]{Zhu:2001aa} is implied by the following. 

\begin{theorem}[{\cite[Theorem 3.1]{Zhu:2001aa}}]
\label{thm:chic}
Suppose $\chi(G) = m.$  If there is a proper nonempty subset $A$ of $V(G)$
such that for any $m$-colouring $c$ of $G$, each colour class $X$ of $c$ is either contained in $A$ or is disjoint from $A$, then $\chi_c(G) = \chi(G)$. 
\end{theorem}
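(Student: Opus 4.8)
The strategy mirrors the proof of Proposition~\ref{prop:compdisconn} but replaces the "clique join'' structure by the partition hypothesis on colour classes. Write $m = \chi(G)$, and let $A$ be the given proper nonempty subset, with $B = V(G)\setminus A$. The plan is to fix an arbitrary circular ordering of $V(G)$ and produce a monotonic cycle of length at least $m$; since $m$ is always an upper bound for $\ca(G)$, this gives $\ca(G) = m$, and then $\chi_c(G) = \chi(G)$ follows from Corollary~\ref{cor:circchrom} (note that Theorem~\ref{thm:chic} is exactly the statement $\chi_c(G)=\chi(G)$, which by that corollary is implied by $\ca(G)=\chi(G)$). So it suffices to prove $\ca(G) = m$.

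First I would record the combinatorial consequence of the hypothesis. Let $c$ be an $m$-colouring of $G$ realising some monotonic path for the chosen circular ordering, obtained exactly as in the earlier arguments: pick a suitable starting vertex, linearise the circular ordering there, and colour each vertex by the length of the longest monotonic path ending at it — this is a proper colouring by Remark~\ref{re:lin-col}, hence an $m$-colouring provided we have first checked $\chi(G)\le$ (number of colours used) $\le m$; actually the induced colouring uses at most $\alpha(G)=\chi(G)=m$ colours, and since it is proper it uses exactly $m$. Now by hypothesis each colour class of $c$ lies entirely inside $A$ or entirely inside $B$. Let $a$ be the number of colours used inside $A$ and $b$ the number used inside $B$; then $a+b = m$, and both $a,b\ge 1$ since $A$ and $B$ are nonempty and $c$ restricted to either is a proper colouring (so uses at least one colour — more carefully, $\chi(G[A])\ge 1$ and $\chi(G[B])\ge 1$). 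The key point is that the colours partition into a "block'' of $a$ colours for $A$ and a "block'' of $b$ for $B$, but—and this is the subtlety—the two blocks need not be intervals of $\{1,\dots,m\}$, so I cannot directly glue an $A$-path to a $B$-path.

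The heart of the argument is therefore to choose the linearisation so that the two colour blocks do become consecutive. I would argue as follows: since the hypothesis holds for \emph{every} $m$-colouring, and the induced colouring depends on where we cut the circle, I get to choose the cut point. Choose the cut at a place where some vertex of $A$ immediately follows some vertex of $B$ in the clockwise order (such a place exists unless all of $A$ precedes all of $B$ or vice versa around the circle; if no such adjacency exists the circular ordering already has $A$ and $B$ in consecutive arcs and the gluing argument of Proposition~\ref{prop:compdisconn} applies verbatim once we note every colour class is monochromatically in one arc). With the cut so chosen, run the induced colouring. The claim to establish is that, after possibly also using a symmetric cut where a $B$-vertex follows an $A$-vertex, the colours of $A$ occupy $\{1,\dots,a\}$ and those of $B$ occupy $\{a+1,\dots,m\}$ (or the reverse): this is where I expect the real work, and it should follow from a careful induction on the linear order showing that once we are "inside the $A$-arc'' the longest monotonic path ending at an $A$-vertex cannot have accumulated $B$-colours, using that any $B$-vertex earlier in the line that is adjacent to our $A$-vertex would have to have a strictly smaller colour, forcing the $B$-colours to sit below the $A$-colours. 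Once the blocks are intervals, take a monotonic path $P_A$ in $G[A]$ on $\chi(G[A])\ge a$ vertices realising colours $1,\dots,a$ and a monotonic path $P_B$ in $G[B]$ on $\ge b$ vertices realising colours $a+1,\dots,m$; since the last vertex of $P_A$ precedes the first vertex of $P_B$ in the linear order and both paths are monotonic, $P_A$ followed by $P_B$ is a monotonic path on $\ge m$ vertices, and by the cyclic wraparound (its last vertex is a $B$-vertex at the end of the line, its first an $A$-vertex at the start, and by construction of the cut these are adjacent) it closes to a monotonic cycle of length $\ge m$. This yields $\ca(G)\ge m$, and combined with the universal bound $\ca(G)\le\chi(G)=m$ we conclude $\ca(G)=m$, hence $\chi_c(G)=\chi(G)$ by Corollary~\ref{cor:circchrom}.
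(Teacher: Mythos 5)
This theorem is quoted from Zhu's survey (\cite[Theorem 3.1]{Zhu:2001aa}) and is not proved in the paper, so there is no in-paper argument to compare against; your proposal must stand on its own. It does not: the entire strategy is to establish $\ca(G)=m$ and then invoke Corollary~\ref{cor:circchrom}, but the hypothesis of Theorem~\ref{thm:chic} does \emph{not} imply $\ca(G)=\chi(G)$. The paper itself supplies the counterexample in the very paragraph following the theorem: a uniquely $n$-colourable graph $G$ of girth $g>n$ satisfies the hypothesis (take $A$ to be one colour class of the unique partition into independent sets; every $n$-colouring then has each class equal to $A$ or disjoint from it), yet $\ca(G)=2$, since $\ca(G)>2$ would force $\ca(G)\ge\girth(G)>n\ge\ca(G)$. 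So for $n\ge 3$ the intermediate claim $\ca(G)=m$ you are trying to prove is simply false, and no choice of cut point can rescue it.

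The concrete step where your argument breaks is the gluing. In Proposition~\ref{prop:compdisconn} the concatenation of $P_A$ and $P_B$ into a monotonic path, and its closure into a cycle via the wraparound, both rely on \emph{every} vertex of $A$ being adjacent to \emph{every} vertex of $B$ --- that is exactly what the disconnectedness of $\overline{G}$ provides. Under the colour-class hypothesis there need be no edge at all between the last vertex of $P_A$ and the first vertex of $P_B$, nor between the two path endpoints and the vertices $a,b$ at the cut; a monotonic path requires consecutive vertices to be adjacent in $G$, not merely consecutive in the ordering. In the uniquely colourable example, $P_A$ is a single vertex of an independent set and any would-be monotonic cycle of length $m=n$ is excluded outright by the girth condition. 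Any correct proof of Theorem~\ref{thm:chic} has to work directly with circular ($(p,q)$-) colourings, as Zhu does, rather than pass through the circular altitude.
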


It was noted in \cite{Zhu:2001aa} that all other known sufficient conditions for a graph $G$ to satisfy $\chi_c(G) = \chi(G)$ were easily derived from Theorem \ref{thm:chic}.  However, Corollary \ref{cor:circchrom} is a legitimately distinct sufficient condition.  For instance, as noted on \cite[p. 378]{Zhu:2001aa}, for any integers $n \ge 1$ and $g \ge 3$, there is a graph of girth at least $g$ that is uniquely $n$-colourable.  For this graph, Theorem \ref{thm:chic} implies that the circular chromatic number is $n$, but Corollary \ref{cor:circchrom} is useless if $g > n$.  On the other hand, if $G=\myc(K_3)$ is the Mycielskian of $K_3$ (see Figure \ref{MK3}) then $G$ has vertex set $\{a^u,a^v,b^u,b^v,c^u,c^v,w\}$ and edge set 
\[
\{a^vb^v, a^vc^v, b^vc^v, a^ub^v, a^uc^v, b^ua^v, b^uc^v, c^ua^v, c^ub^v, a^uw, b^uw,  c^uw\}.
\]
By a corollary of the Zig-zag Theorem in \cite{Simonyi:2006aa} (see Theorem \ref{thm:zigzag} in the next section), $\ca(G) = 4$, and hence $\ca(G) = \chi_c(G) = \chi(G) = 4$.  (It should be noted that one can prove that $\ca(G) = 4$ for this graph by elementary methods as well.)  On the other hand, there exist colourings $c_1: V(G) \rightarrow \{1,2,3,4\}$ and $c_2: V(G) \rightarrow \{1,2,3,4\}$ given by $c_1(w) = 1$, $c_1(a^v) = c_2(a^u) = 2$, $c_1(b^v) = c_1(b^u) = 3$, $c_1(c^v) = c_1(c^u) = 4$, and $c_2(w) = c_2(a^v) = 1$, $c_2(a^u) = c_2(b^u) = c_2(c^u) = 2$, $c_2(b^v) = 3$, $c_2(c^v) = 4$, which demonstrate that $G$ does not satisfy the hypotheses of Theorem \ref{thm:chic}.  This leads to the following ``natural'' question: can Corollary \ref{cor:circchrom} be used to show that $\chi_c(G) = \chi(G)$ for any new graphs $G$?  For instance, one such open case is $G = \myc^t(K_n)$ where $n \ge t+2$ and $n+t$ is odd (the even case was settled by Simonyi and Tardos in \cite{Simonyi:2006aa}).

\begin{figure}[ht]
\caption{$\myc(K_3)$}
\begin{tikzpicture}[scale=0.8]
   \pgfmathsetmacro{\xcoord}{2*cos(210)}%
    \pgfmathsetmacro{\ycoord}{2*sin(210)}%
    \pgfmathsetmacro{\xcoordtwo}{2*cos(330)}%
\Vertex[x=\xcoord, y=\ycoord, L = $a^u$ ]{A}  
\Vertex[x=2*\xcoord, y=2*\ycoord, L = $a^v$]{A2}  
\Vertex[x=\xcoordtwo,y=\ycoord, L = $b^u$]{B}
\Vertex[x=2*\xcoordtwo,y=2*\ycoord, L = $b^v$]{B2}
\Vertex[x=0,y=2, L = $c^u$]{C}
\Vertex[x=0,y=4, L = $c^v$]{C2}
\Vertex[L=$w$]{O}
\Edges(A2,B2,C2,A2)
\Edges(B2,A,O,B,A2,C,O)
\Edges(B,C2,A)
\Edges(B2,C)
\end{tikzpicture}\label{MK3}
\end{figure}

\section{The circular altitude of iterated Mycielskians}\label{sec:myc}

First we need to introduce some notation which will be convenient for dealing with iterated Mycielski graphs.
Recall that the Mycielskian $\myc(G)$ of a graph $G$ is the graph obtained from $G$ by adding a vertex $u'$ for every $u \in V(G)$, plus an extra vertex $w$ so that every vertex $u'$ is adjacent to the neighbours of $u$ and to $w$ (and no further edges are added to the graph). Iterating this process $i$ times we obtained the graph $\myc^i(G)$.

Given a graph $G$ we iteratively define (for every $i \ge 1$) the vertex set of $\myc^i(G)$ to be 
\[ V(\myc^i(G))=\{w_i\} \cup \{a^u \ |\ a \in V(\myc^{i-1}(G))\} \cup \{a^v \ |\ a \in V(\myc^{i-1}(G))\},\]
where the vertices of the form $a^v$ correspond to the original vertices of $\myc^{i-1}(G)$, vertex $a^u$ is the one paired-up with $a^v$ and $w_i$ is the vertex adjacent to all the other new vertices.
From the definition of Myciesklian, in this notation we have the following adjacency rules:
 \begin{itemize}
	\item[(R1)] For any vertex $a \in V(\myc^{i-1}(G))$, $w_i \sim a^u$ and $w_i \not\sim a^v$.
	\item[(R2)] For any vertices $a,b \in V(\myc^{i-1}(G))$, $a^u \not\sim b^u$.
	\item[(R3)] For any vertices $a,b \in V(\myc^{i-1}(G))$, $a^v \sim b^v \Leftrightarrow a^v \sim b^u \Leftrightarrow a \sim b.$ 
\end{itemize}

So, for example, if $G\cong K_2$, with vertex set $\{a,b\}$, then $\myc(G)$ will be the $5$-cycle with vertex set $\{a^u,a^v,b^u,b^v,w_1\}$ and edge set 
\[
\{a^vb^v, a^ub^v, b^ua^v, a^uw_1, b^uw_1 \}.
\]
The vertices of $\myc^2(G)$ are $w_2, w_1^u, w_1^v$ together with all vertices of the form $a^W, b^W$, or  $c^W$, for all choices of a word $W$ of length two in $u,v$.
It turns out that $\myc^2(G)$ is isomorphic to the \emph{Gr\"otzsch graph}.

We will add the following conventions: when used in exponential notation, $[m]$ refers to an arbitrary word of length $m$ in $u,v$, and we will always read words from right to left.  
So, for instance, $[r-i]u[i-1]$ is an arbitrary word in $u,v$ that has length $r$ and has a $u$ in the $i^{th}$ position.  

\begin{lemma}
\label{lem:w_i}
Let $W_1$ and $W_2$ be any two distinct words of length $r-i$ in $u,v$.  Then:
\begin{enumerate}
\item[(i)] If $i^{W_1} \sim j^{W_2}$ in $\myc^r(G)$, then $i\sim j$ in $G$, unless $i$ or $j$ is a ``$w$'' vertex.
\item[(ii)] If, for some $1\le s \le r$, both $W_1$ and $W_2$ have a $u$ in position $s$, then $i^{W_1}$ is not adjacent to $j^{W_2}$ in $\myc^r(G)$, for every $i,j \in V(G)$.
\item[(iii)] $w_i^{W_1}$ is not adjacent to $w_i^{W_2}$.
\end{enumerate}
\end{lemma}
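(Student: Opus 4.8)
The plan is to establish all three statements simultaneously by induction on $r$, peeling off the outermost Mycielskian operation. Note first that the word attached to a decorated vertex has length equal to the number of Mycielskian layers above the one in which its underlying vertex was created: length $r$ for a vertex $a^{W}$ with $a\in V(G)$, and length $r-i$ for a copy $w_i^{W}$ of $w_i$. The base case $r=0$ is vacuous on all counts (every word is empty, so the distinctness hypotheses in (i) and (iii) cannot be met, and the range $1\le s\le 0$ in (ii) is empty), and I would also check $r=1$ directly from (R2) and (R3) if a concrete anchor is wanted.

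For the inductive step set $H=\myc^{r-1}(G)$, so that $\myc^{r}(G)=\myc(H)$, and write each relevant word as $W=W'\varepsilon$ with $\varepsilon\in\{u,v\}$ recording the outermost step; then $a^{W}=(a^{W'})^{\varepsilon}$ is an outermost $u$- or $v$-copy of the vertex $a^{W'}\in V(H)$. In each of (i), (ii), (iii) the base vertices involved ($i^{W'_1},j^{W'_2}$ in (i) and (ii), and $w_i^{W'_1},w_i^{W'_2}$ in (iii), where we may assume $i\le r-1$ since otherwise the words have length $0$) lie in $V(H)$ and are never the newly added vertex $w_r$, so rule (R1) plays no role and only (R2) and (R3) are used. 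In (i) there is nothing to prove when $i$ or $j$ is itself a $w$-vertex, so we may also assume $i,j\in V(G)$ there.

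Now argue by cases on the outermost letters $(\varepsilon_1,\varepsilon_2)$. If $\varepsilon_1=\varepsilon_2=u$, then (R2) already forces the two vertices to be non-adjacent, which yields the conclusions of (ii) and (iii) and makes the premise of (i) vacuous. Otherwise at least one of $\varepsilon_1,\varepsilon_2$ equals $v$, and (R3) gives that $(a^{W'_1})^{\varepsilon_1}\sim(b^{W'_2})^{\varepsilon_2}$ in $\myc(H)$ if and only if $a^{W'_1}\sim b^{W'_2}$ in $H$, reducing everything to $\myc^{r-1}(G)$. If $W'_1\ne W'_2$ we invoke the induction hypothesis: for (i) this gives $i\sim j$ in $G$; for (ii), since the distinguished position $s$ is at most $r-1$, both $W'_1$ and $W'_2$ still carry a $u$ there, so the reduced edge cannot exist; for (iii) the reduced edge cannot exist either. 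If instead $W'_1=W'_2$, then $W_1\ne W_2$ forces $\varepsilon_1\ne\varepsilon_2$, so $\{\varepsilon_1,\varepsilon_2\}=\{u,v\}$, and the reduced adjacency reads $a^{W'_1}\sim a^{W'_1}$, a loop, which is impossible; this again gives non-adjacency for (ii) and (iii), while for (i) (where possibly $i\ne j$) it gives $i^{W'_1}\sim j^{W'_1}$ in $H$, and I would finish using the auxiliary fact that $a^{W}\sim b^{W}$ in $\myc^{r}(G)$ implies $a\sim b$ in $G$, itself an easy induction from (R2) and (R3).

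The real work here is bookkeeping rather than ideas: one must keep the position index of a word aligned with the Mycielskian layer that created it, so that ``position $s$ with $s\le r-1$'' is preserved under truncation; dispose of the $w$-vertex exception in (i); and handle the degenerate equal-truncated-word case $W'_1=W'_2$ cleanly. Once the reduction via (R2)/(R3) is in place, each of these points is routine.
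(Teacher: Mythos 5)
Your proof is correct and takes essentially the same route as the paper, whose entire argument is the one-line assertion that (i)--(iii) follow by induction from the adjacency rules (R1)--(R3); you have simply carried out that induction in full, peeling off the outermost Mycielskian layer and reducing via (R2)/(R3), including the degenerate case $W_1'=W_2'$ and the auxiliary fact that $a^W\sim b^W$ forces $a\sim b$. The only blemish is a harmless indexing slip in case (ii): under the paper's convention position $1$ is the rightmost (outermost) letter, so a shared $u$ at position $s\ge 2$ survives as a shared $u$ at position $s-1$ of the truncated words rather than at position $s\le r-1$ --- exactly the bookkeeping you flag at the end.
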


\begin{proof}
By induction, the adjacency rules R1--R3 imply (i) and (ii).
Finally (iii) follows from the definition of the Mycielski construction (no vertex is ever adjacent to a copy of itself) and rules R1--R3 for adjacency. 
\end{proof}

For ease of notation, we will let $\myc^0(G) = G$.  Given a (linear) ordering of the vertices of $\myc^r(G)$, $r \ge 1$, the vertices of $\myc^{r-1}(G)$ correspond to the vertices in $i^W$, where $i \in V(G)$ and $W$ is a word whose rightmost letter is $v$, and $w_j^U$, where $j \le r-1$ and $U$ is a word of length $r - j$ whose rightmost letter is $v$.  This subset of vertices induces an ordering of $\myc^{r-1}(G)$.  Based on this observation, we will call this ordering of $\myc^{r-1}(G)$ the {\em ordering of $\myc^{r-1}(G)$ inherited from the ordering of $\myc^r(G)$}.  Note that, starting with an ordering of $\myc^r(G)$, we may iterate this process until we arrive at an induced order for $\myc^0(G) = G$.  This order corresponds to a colouring $c$ of the original graph, and we refer to this as the {\em colouring of $G$ induced by the ordering of $\myc^r(G)$}.

We now present a convention for ordering the vertices of $\myc^r(G)$, where $G$ is any graph. 
%Assume that $c\colon V(G) \rightarrow \{1,\ldots, d\}$ is a proper $d$-colouring of $G$ where $d=\chi(G)$.  
We order the words of length $r$ in $u,v$ lexicographically by reading from right to left and assuming that $v < u.$  (So $uvv < vuv < uvu$, for instance.)  A \textit{powerful ordering} of $\myc^r(G)$ is a linear ordering of the vertices satisfying all of the following five conditions:
\begin{itemize}
	\item[(P0)] The colouring $c$ of $G$ induced by the powerful ordering of $\myc^r(G)$ is a $\chi(G)$-colouring of $G$.
	\item[(P1)] For any $i,j \in V(G)$ and any fixed word $W$ of length $r$ in $u,v$, if $i^W$ occurs before $j^W$ in the ordering, then $c(i) \le c(j)$.
	\item[(P2)] For any words $W_1$ and $W_2$ of length $r$ in $u,v$ and any $i,j \in V(G)$, if $W_1 < W_2$ lexicographically, then $i^{W_1}$ occurs before $j^{W_2}$ in the ordering.
	\item[(P3)] For all $1 \le i \le r$ and all $j \in V(G)$, $w_i^{[r-i]}$ occurs before $j^{[r]}$ in the ordering.
	\item[(P4)] For all $i < j$, $w_i^{[r-i]}$ occurs before $w_j^{[r-j]}$ in the ordering. 
\end{itemize}

\begin{lemma}\label{lemma:powerful}
Let $c\colon V(G) \rightarrow \{1,\ldots, d\}$ be a proper colouring of a graph $G$, where $d=\chi(G)$. Then $\myc^r(G)$ has a powerful ordering inducing the colouring $c$.
%For every graph $G$, $\myc^r(G)$ has a powerful ordering.
\end{lemma}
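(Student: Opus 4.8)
The plan is to construct the powerful ordering recursively on $r$, peeling off one Mycielski layer at a time. For the base case $r=0$, the ordering of $G=\myc^0(G)$ that places vertices by increasing $c$-value (breaking ties arbitrarily) induces the colouring $c$ and trivially satisfies (P0); conditions (P1)--(P4) are vacuous when $r=0$ (there are no words of positive length and no $w_i$'s). So the real content is the inductive step: given a powerful ordering $\prec'$ of $\myc^{r-1}(G)$ inducing $c$, build a powerful ordering $\prec$ of $\myc^{r}(G)$ inducing $c$.

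For the inductive step, recall $V(\myc^r(G)) = \{w_r\} \cup \{a^u : a \in V(\myc^{r-1}(G))\} \cup \{a^v : a \in V(\myc^{r-1}(G))\}$. I would define $\prec$ by the following block structure, read left to right (i.e.\ earliest first): first all the ``$u$-copies'' $\{a^u : a \in V(\myc^{r-1}(G))\}$, ordered among themselves by the order $\prec'$ on the corresponding $a$'s; then the vertex $w_r$; then all the ``$v$-copies'' $\{a^v : a \in V(\myc^{r-1}(G))\}$, again ordered by $\prec'$ on the $a$'s. One checks this directly against the five conditions. For (P2): a word $W$ of length $r$ factors as $W = W' x$ where $x \in \{u,v\}$ is the leftmost letter (since we read right to left, $x$ is the newest/$r$-th coordinate), and $i^{W'x}$ is the copy of $i^{W'} \in V(\myc^{r-1}(G))$ sitting in the $x$-block; since $v < u$ lexicographically and the $v$-block comes \emph{after} the $u$-block, wait --- here I need to be careful about orientation. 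Reading right to left with $v<u$, the lexicographically smaller word has $u$ in the top position if the lower positions agree, so $W_1 < W_2$ with differing top letter means $W_1$ has top letter $u$; hence $i^{W_1}$ lies in the $u$-block and must precede $j^{W_2}$ in the $v$-block --- consistent with placing the $u$-block first. If $W_1,W_2$ share their top letter $x$, then $W_1 < W_2$ iff $W_1' < W_2'$ as words of length $r-1$, and by the inductive hypothesis (P2) for $\prec'$ we get $i^{W_1'} \prec' j^{W_2'}$, hence $i^{W_1} \prec j^{W_2}$ within the $x$-block. This also handles (P1) (fix $W = W'x$; if $i^W$ precedes $j^W$ then $i^{W'} \prec' j^{W'}$, and by induction $c(i) \le c(j)$) and (P3)--(P4) ($w_r^{[0]} = w_r$ sits between the two blocks, hence before every $j^{[r]} = j^{W'x}$ with $x = v$ --- these are exactly the top-letter-$v$ vertices, in the $v$-block --- and, hmm, I need $w_r$ before $j^{[r]}$ for \emph{all} words of length $r$, but the top-letter-$u$ copies $j^{W'u}$ are in the $u$-block, \emph{before} $w_r$).

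So the block structure above is slightly wrong and the main subtlety of the proof is getting the interleaving of $w_r$ and the inherited $w_i^{[r-i]}$'s exactly right so that (P3) and (P4) hold. The fix: note that $w_i^{[r-i]}$ for $i \le r-1$ is, in the inherited-ordering language, a vertex of $\myc^{r-1}(G)$ appended with the single newest letter, and the convention ``$j^{[r]}$'' in (P3) means a copy of $j \in V(G)$ --- and every copy of a genuine $G$-vertex $j$ in $\myc^r(G)$ has a well-defined newest letter. The honest construction is: place $\prec$ so that the $u$-block comes first, then $w_r$, then the $v$-block, \emph{but} within the $v$-block the vertices $w_1^{[r-1]v}, \dots, w_{r-1}^{[r-1]v}$ (which by (P3) for $\prec'$ already precede every $j^{[r-1]}$ within $\myc^{r-1}(G)$, and hence the inheritance puts them first in the $v$-block) come before all $j^{[r]v}$; together with $w_r$ preceding the whole $v$-block this gives $w_r \prec w_1^{\cdots} \prec \cdots \prec w_{r-1}^{\cdots} \prec j^{[r]}$ for the $v$-copies. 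For $u$-copies $j^{[r]u}$: these sit in the $u$-block, but so do $w_1^{[r-1]u},\dots,w_{r-1}^{[r-1]u}$, and inheritance/(P3)$'$ again puts the $w$'s first; as for $w_r$ versus $j^{[r]u}$, I should instead put $w_r$ \emph{at the very front}, before the $u$-block. Re-examining: put $w_r$ first, then the $u$-block (ordered by $\prec'$), then the $v$-block (ordered by $\prec'$). Then (P3): $w_r \prec$ everything, done; and $w_i^{[r-i]}$ for $i<r$ is in one of the two blocks and precedes every $j^{[r]}$ of the same newest letter by (P3) for $\prec'$. (P4): $w_r$ before all, and $w_i^{[r-i]} \prec w_j^{[r-j]}$ for $i<j\le r-1$ in the same block by (P4) for $\prec'$, and in different blocks ($u$-block before $v$-block) we need the $u$-block index to be smaller --- which is \emph{not} guaranteed. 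I expect this last clash (an inherited $w_i$ in the $v$-block with small $i$ versus an inherited $w_j$ in the $u$-block with larger $j$) to be the one genuine obstacle, and the resolution is that (P4) only constrains the \emph{distinguished} representatives $w_i^{[r-i]}$, and one is free to choose which word $[r-i]$ plays that role --- choosing all distinguished $w_i$'s to lie in the $u$-block (legitimate, since $w_i$ has copies with every word) makes (P4) reduce cleanly to the inductive hypothesis. With that choice all five conditions follow; (P0) is immediate since the induced colouring of $G$ is unchanged through the peeling and equals $c$ by hypothesis. $\hfill\square$
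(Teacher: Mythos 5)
Your recursive layer-peeling strategy runs into exactly the obstacle you flag at the end, and the fix you propose does not work. In the paper's notation the bracket $[m]$ denotes an \emph{arbitrary} word of length $m$, so (P3) and (P4) are universally quantified: \emph{every} copy $w_i^{W}$, for \emph{every} word $W$ of length $r-i$, must precede every $j^{[r]}$ and every copy of every $w_{i'}$ with $i'>i$. There is no freedom to choose a ``distinguished representative'' of $w_i$, and the later uses of (P3)--(P4) (Lemmas~\ref{lem:wPaths} and~\ref{lem:tPaths}, where an arbitrary $w$-vertex on a monotonic path forces everything before it on the path to be a $w$-vertex) genuinely need the universal statement. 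Since for $i\le r-1$ the copies of $w_i$ occur with both possible newest letters, they are scattered across both of your blocks, so no ordering that keeps the $u$-copies and the $v$-copies of $V(\myc^{r-1}(G))$ as contiguous chunks can satisfy (P4): a copy of $w_1$ in the later block would follow a copy of $w_{r-1}$ in the earlier block. A second, smaller slip concerns orientation: new letters are appended on the \emph{right} (the vertices of $\myc^{r-1}(G)$ inside $\myc^r(G)$ are exactly the $i^W$ with rightmost letter $v$; compare also $W_0=[j-1]u[r-j]$ in Lemma~\ref{lem:wPaths}), and the right-to-left lexicographic reading makes that newest letter the most significant; since $v<u$, (P2) forces the entire $v$-block to precede the entire $u$-block --- the opposite of your final arrangement.

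The repair is to abandon the contiguous-block recursion for the $w$-vertices and use the paper's direct, one-shot construction: first place all copies $w_1^{[r-1]}$ (in any internal order), then all copies $w_2^{[r-2]}$, and so on up to $w_r$; then list the words of length $r$ in increasing lexicographic order and, for each word $W$ in turn, place the vertices $i^W$ in non-decreasing order of $c(i)$. Then (P3) and (P4) hold because all $w$-copies sit at the front grouped by subscript, (P2) holds by the word-block order, (P1) by the within-block order, and (P0) because the inherited ordering of $G$ is the colour-class ordering for $c$. Your base case and your within-block verification of (P1)/(P2) are fine; it is the inductive block framework itself that cannot accommodate (P3) and (P4).
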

\begin{proof}
%Let %$c: V(G)\rightarrow \{1,\ldots, d\}$ be a proper colouring of $G$, where 
%$\chi(G)=d$.
%First, we can easily obtain a colouring of $\myc(G)$ with $d+1$ colours as follows: for every vertex $a \in V(G)$, both vertices $a^u$ and $a^v$ get colour $c(a)$, while vertex $w_1$ gets colour $d+1$.
%Iteratively, this yields a colouring of $\myc^r(G)$ with $d+r$ colours where, for every word $W$ in $u,v$ of length $r$, vertex $a^W$ receives colour $c(a)$ and for every $i \in \{1,\ldots, r\}$ vertices of the form $w_i^{[r-i]}$ receive colour $d+i$.
%
It is easy to obtain a powerful ordering of $V(\myc^r(G))$ inducing this colouring as follows. First place all the vertices of the form $w_1^{[r-1]}$ (in any order), then all vertices of the form $w_2^{[r-2]}$ (in any order) and so on up to vertex $w_r$. This guarantees that properties P3 and P4 of powerful orderings are satisfied. Next, order all words of length $r$ in $u,v$ lexicographically (as above), as $W_1< W_2<\cdots<W_m$. Then in the powerful ordering we place, after vertex $w_r$, all vertices $i^{W_1}$ for $i \in V(G)$ in non-decreasing order of colour, i.e., $i^{W_1}$ comes before $j^{W_1}$ if $c(i)<c(j)$. This guarantees that P0 and P1 hold. Next we place in a similar fashion all vertices of the form $i^{W_2}$ and so on, up to vertices of the form $i^{W_m}$ (this guarantees that P2 holds). 
\end{proof}

%\begin{remark}
%\label{remark:inherit} 
%  Given a powerful ordering of the vertices of $\myc^r(G)$, $r \ge 1$, the vertices of $\myc^{r-1}(G)$ correspond to the vertices in $i^W$, where $i \in V(G)$ and $W$ is a word whose rightmost letter is $v$, and $w_j^U$, where $j \le r-1$ and $U$ is a word of length $r - j$ whose rightmost letter is $v$.  This subset of vertices is itself in a powerful ordering, and this induces a powerful ordering of $\myc^{r-1}(G)$.  Based on this observation, we will call this ordering of $\myc^{r-1}(G)$ the {\em powerful ordering inherited from the powerful ordering of $\myc^r(G)$}.
%\end{remark}

%(This ordering is similar to the one(s) we were considering before, but with one major, key difference: now all ``$w$" vertices are before all other vertices.  Moreover, the order of the $w$-vertices among themselves has been altered slightly.)

\begin{lemma}
\label{lem:uAfterPositioni}
Let $i,j \in V(G)$ such that $i \sim j$ in $G$, and assume $i^{W_1} \sim j^{W_2}$ in $\myc^r(G)$ for words $W_1$ and $W_2$ of length $r$ in $u$ and $v$, where $j^{W_2}$ comes after $i^{W_1}$ in a powerful ordering. If $W_1$ has a $u$ in the $s^{th}$ position, where $s > 1$, then for some integer $k$, $1 \le k < s$, $W_1$ has a $v$ in $k^{th}$ position and $W_2$ has a $u$ in the $k^{th}$ position.
\end{lemma}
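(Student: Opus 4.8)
The statement compares the positions of the $u$'s in the two words $W_1,W_2$ of the adjacent pair $i^{W_1}\sim j^{W_2}$. I would argue by contradiction: suppose that for \emph{every} position $k<s$, it is \emph{not} the case that $W_1$ has a $v$ in position $k$ while $W_2$ has a $u$ there. Since $W_1$ has a $u$ in position $s>1$, the key leverage is to look at what happens in positions below $s$, and to use the lexicographic ordering convention (reading right to left, $v<u$) together with properties P1--P4 of the powerful ordering, plus \lref{w_i}(ii), which forbids adjacency when both words share a $u$ in a common position.

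\textbf{Key steps.} First I would record the combinatorial consequences of the contradiction hypothesis: for each $k<s$, either $W_1$ has a $u$ in position $k$, or both $W_1$ and $W_2$ have a $v$ in position $k$, or $W_1$ has $v$ and $W_2$ has $v$ in position $k$ — in short, whenever $W_1$ has a $v$ in some position $k<s$, so does $W_2$. Next, I would split on what $W_2$ does in position $s$. If $W_2$ also has a $u$ in position $s$, then \lref{w_i}(ii) immediately contradicts $i^{W_1}\sim j^{W_2}$. So $W_2$ must have a $v$ in position $s$. Now I compare $W_1$ and $W_2$ lexicographically. Reading right to left with $v<u$: in every position $k<s$ where they could first differ, the contradiction hypothesis forces that if $W_1$ has $v$ then $W_2$ has $v$; combined with $W_1$ having a $u$ in position $s$ and $W_2$ having a $v$ there, a careful bookkeeping of the lexicographic comparison shows $W_2<W_1$. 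Then property P2 of the powerful ordering says $j^{W_2}$ occurs before $i^{W_1}$, contradicting the hypothesis that $j^{W_2}$ comes \emph{after} $i^{W_1}$.

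\textbf{Main obstacle.} The delicate part is the lexicographic comparison: I need to handle positions $k<s$ where $W_1$ has a $u$ (the contradiction hypothesis says nothing directly about $W_2$ there) and show they do not spoil the conclusion $W_2<W_1$. The point is that the \emph{highest} position at which $W_1$ and $W_2$ differ controls the comparison, and position $s$ is a position where $W_1$ has $u$ and $W_2$ has $v$, hence $W_2$ is smaller unless they first differ at some position $k>s$ — but \lref{uAfterPositioni} only asserts something about positions below $s$, so I should be careful about whether $s$ is really the top differing position or whether I must also invoke what $W_1$ does in positions above $s$. I expect the cleanest route is: let $s$ be as given, and among all positions $\le s$ let $k_0$ be the largest at which $W_1,W_2$ differ; if $k_0=s$ we are done as above, and if $k_0<s$ then at $k_0$ either $W_1$ has $u$ and $W_2$ has $v$ (again $W_2<W_1$, done) or $W_1$ has $v$ and $W_2$ has $u$ — but that last case is exactly the conclusion we want, contradicting our assumption that it never happens for $k<s$. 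This reduces everything to the single observation about the top differing position among $\{1,\dots,s\}$, with \lref{w_i}(ii) disposing of the $W_1$-has-$u$-and-$W_2$-has-$u$-at-$s$ subcase.
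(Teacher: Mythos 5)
Your overall strategy matches the paper's: argue by contradiction, use the fact that adjacent vertices cannot both have a $u$ in a common position (\lref{w_i}(ii)), and derive $W_2<W_1$ so that P2 contradicts the hypothesis that $j^{W_2}$ comes after $i^{W_1}$. But your execution of the lexicographic step contains a genuine error. Under the paper's conventions, positions are numbered from the right and words are compared by reading from right to left with $v<u$, so the comparison is controlled by the \emph{smallest}-indexed position at which the words differ (check against the paper's own example: $uvv<vuv$ because they first differ at position $2$, not because of position $3$). You assert the opposite --- that the ``highest'' position controls --- and this sends you chasing a non-issue (positions $k>s$) with a device that does not close the gap: once you know $W_2$ has a $v$ in position $s$, the largest differing position among $\{1,\dots,s\}$ is always $s$ itself, so your case $k_0<s$ is vacuous, and your caveat ``unless they first differ at some position $k>s$'' is never discharged. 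As written, the conclusion $W_2<W_1$ is not established.

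The fix is short and you already hold all the pieces. \lref{w_i}(ii) applies in \emph{every} position, not only at position $s$: wherever $W_1$ has a $u$ (including positions $k<s$), $W_2$ must have a $v$. Combined with your correct observation that the negated conclusion forces $W_2$ to have a $v$ wherever $W_1$ has a $v$ in a position $k<s$, this shows that \emph{all} of the first $s$ letters of $W_2$ are $v$. Since $W_1$ has a $u$ in position $s$, the two words differ somewhere in positions $1,\dots,s$, and at the first (i.e.\ lowest-indexed, hence most significant) such position $W_1$ has $u$ while $W_2$ has $v$; therefore $W_2<W_1$, and P2 places $j^{W_2}$ before $i^{W_1}$, the desired contradiction. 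This is precisely the paper's proof.
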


\begin{proof}
Suppose not.  First, by rules R1--R3, $W_1$ and $W_2$ cannot both have a $u$ as the $m^{th}$ letter for any $1 \le m \le r.$  Thus we may assume 
that the first $s-1$ symbols of $W_2$ are $v$, and so $W_2$ comes before $W_1$ lexicographically.  However, $j^{W_2}$ comes after $i^{W_1}$, so this is a contradiction to property P2 of powerful orderings.  
Hence the result holds.
\end{proof}

\begin{lemma}
\label{lem:wPaths}
Suppose that the vertices of $\myc^r(G)$ are arranged in a powerful ordering.  Then any monotonic path in the ordering beginning at $w_i^{[r-i]}$ has length at most $r+2-i$.
\end{lemma}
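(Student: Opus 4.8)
The plan is to trace a monotonic path $P=(x_1,\ldots,x_\ell)$ with $x_1=w_i^{W}$ and to split it at the moment it leaves the set of ``$w$-vertices'' $\{w_j^{[r-j]}:1\le j\le r\}$. By (P3) every $w$-vertex precedes every vertex $k^{[r]}$ with $k\in V(G)$, so once $P$ reaches a vertex of the second kind it stays among them; hence there is $p\ge1$ with $x_1,\ldots,x_p$ all $w$-vertices, say $x_m=w_{j_m}^{U^{(m)}}$ with $j_1=i$, while $x_{p+1},\ldots,x_\ell$ (if any) are all of the form $x_m=k_m^{W^{(m)}}$ with $k_m\in V(G)$. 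Along the $w$-part the levels strictly increase: $x_{m+1}$ comes after $x_m$, so (P4) forbids $j_{m+1}<j_m$, while $j_{m+1}=j_m$ would give $w_{j_m}^{U^{(m)}}\sim w_{j_m}^{U^{(m+1)}}$, contradicting \lref{w_i}(iii). Hence $i=j_1<j_2<\cdots<j_p\le r$, so $p\le j_p-i+1$; if $\ell=p$ we are done, so assume the $G$-part is nonempty.

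The single fact I need beyond the lemmas already established is: \emph{if $k^{W'}\sim w_j^{[r-j]}$ with $k\in V(G)$, then $W'$ has a $u$ in position $r-j+1$} (positions read from the right). This is proved like \lref{w_i}, by induction on $r$: when $r=j$ it is a direct consequence of the adjacency rules R1--R3, and when $r>j$ the last letters contributed by the two words cannot be $(u,u)$ (by R2), while in each of the three remaining cases adjacency in $\myc^r(G)$ is equivalent to adjacency of the parents in $\myc^{r-1}(G)$; the inductive hypothesis then places a $u$ in position $r-j$ of the parent word, which becomes position $r-j+1$ once the final letter is re-appended on the right. Applying this to the edge $\{x_p,x_{p+1}\}$ shows the length-$r$ word $W^{(p+1)}$ has a $u$ in position $r-j_p+1$. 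Writing $g(A)$ for the least position at which a word $A$ carries a $u$, we conclude $g(W^{(p+1)})\le r-j_p+1$.

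Next I bound the $G$-part by showing $g$ strictly decreases along it. Each $W^{(m)}$ with $m\ge p+1$ satisfies $W^{(m)}\ge_{\mathrm{lex}}W^{(p+1)}$ by (P2), hence is not $v^{r}$, hence carries a $u$, so $g$ is defined throughout. Consider consecutive $x_m,x_{m+1}$ in the $G$-part. If $g(W^{(m)})=1$, then $W^{(m)}$ has a $u$ in position $1$, and for $x_{m+1}$ to come after $x_m$ we would need $W^{(m+1)}\ge_{\mathrm{lex}}W^{(m)}$ (by (P2)), forcing $W^{(m+1)}$ to have a $u$ in position $1$ too, which contradicts $x_m\sim x_{m+1}$ via \lref{w_i}(ii). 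So $g(W^{(m)})>1$; since also $k_m\sim k_{m+1}$ in $G$ (by \lref{w_i}(i)) and $x_{m+1}$ follows $x_m$, \lref{uAfterPositioni} with $s=g(W^{(m)})$ produces a position $k<g(W^{(m)})$ at which $W^{(m+1)}$ carries a $u$, i.e.\ $g(W^{(m+1)})\le k<g(W^{(m)})$. Therefore $g(W^{(p+1)})>g(W^{(p+2)})>\cdots>g(W^{(\ell)})\ge1$ is a strictly decreasing chain of positive integers, so $\ell-p\le g(W^{(p+1)})\le r-j_p+1$.

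Combining the two estimates gives $\ell\le p+(r-j_p+1)\le(j_p-i+1)+(r-j_p+1)=r+2-i$, as claimed. The main obstacle --- though it is entirely routine --- is the adjacency fact highlighted in the second paragraph: the rest is bookkeeping with the ordering axioms (P2)--(P4) and \lref{w_i}, but that fact is exactly what identifies which coordinate of a neighbour of $w_j^{[r-j]}$ is forced to be $u$, and so is what makes the two estimates dovetail into $r+2-i$.
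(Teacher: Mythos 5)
Your proof is correct and follows essentially the same strategy as the paper's: split $P$ into its initial run of $w$-vertices (bounded via (P4) and \lref{w_i}(iii)) and its tail of $G$-vertices (bounded by the strictly decreasing position of the first $u$, seeded at position $r-j+1$ by the adjacency rule for $w_j^{[r-j]}$ and driven down by \lref{uAfterPositioni}). You merely make explicit two points the paper leaves implicit, namely the inductive verification that a non-$w$ neighbour of $w_j^{[r-j]}$ carries a $u$ in position $r-j+1$, and the terminal case where the first $u$ sits in position $1$.
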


\begin{proof}
Let $P$ be such a path, and assume that $w_j^{[r-j]}$ is the last vertex in $P$ that is a $w$ vertex.  By rules R1--R3, the next vertex in $P$ is of the form $a_0^{W_0}$ for some $a_0 \in V(G)$ and $W_0 = [j-1]u[r-j]$.  By Lemma \ref{lem:uAfterPositioni}, there can only be $r-j$ additional vertices in $P$: the vertex after $a_0^{W_0}$ in the path is of the form $a_1^{W_1}$, where $W_1$ contains a $u$ in one of the first $(r-j)$ positions (reading right to left) where $W_0$ contained a $v$; similarly, a vertex $a_2^{W_2}$ after $a_1^{W_1}$ in the path $P$ is such that $W_2$ contains a $u$ in some earlier position still where $W_1$  contains a $v$, etc.  There are only $r-j$ positions before the forced $u$ in $W_0$, so this means there are at most $r-j+1$ vertices in $P$ after the last $w$ vertex in $P$.  On the other hand, any $w_k^{[r-k]}$ occurring before $w_j^{[r-j]}$ in $P$ necessarily has $k < j$ by Lemma \ref{lem:w_i} and by property P4 of powerful orderings.  Proceeding similarly, we find that there can be at most $j-i$ vertices before $w_j^{[r-j]}$ in $P$.  Altogether, this is a total of at most $r+2-i$ vertices in $P$, as desired. 
\end{proof}

\begin{lemma}
\label{lem:tPaths}
Let $G$ be any graph that has an edge, and suppose $\chi(\myc^r(G)) = t$.  In any powerful ordering of the vertices of $\myc^r(G)$, there are no vertices of the form $w_i^{[r-i]}$ in any monotonic paths of length $t$ or longer.
\end{lemma}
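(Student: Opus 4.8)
The plan is to combine Lemma~\ref{lem:wPaths} with the simple observation that, in a powerful ordering, any monotonic path which contains a ``$w$'' vertex must actually \emph{begin} with one. First I would record the arithmetic: since $\chi(\myc(G))=\chi(G)+1$, iterating gives $t=\chi(\myc^r(G))=\chi(G)+r$, and because $G$ has an edge we have $\chi(G)\ge 2$, so $t\ge r+2$.

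Next I would set up the contradiction. Let $P=v_1,v_2,\ldots,v_m$ be a monotonic path in a powerful ordering with $m\ge t$, and suppose some $v_\ell$ is of the form $w_i^{[r-i]}$; choose $\ell$ minimal with this property. Since $P$ is monotonic, consecutive vertices of $P$ are adjacent and the induced colouring $c$ strictly increases along $P$, so by Remark~\ref{re:lin-col} the vertices $v_1,\ldots,v_m$ occur in exactly this order in the underlying linear ordering. If $\ell\ge 2$, then $v_1$ is not a ``$w$'' vertex, hence has the form $a^W$ with $a\in V(G)$ and $|W|=r$, and $v_1$ precedes $v_\ell=w_i^{[r-i]}$ in the linear ordering --- this contradicts property (P3) of powerful orderings, which places every $w_i^{[r-i]}$ before every $j^{[r]}$. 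Therefore $\ell=1$, i.e.\ $P$ begins at a vertex $w_i^{[r-i]}$ for some $1\le i\le r$.

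Finally I would apply Lemma~\ref{lem:wPaths}: a monotonic path beginning at $w_i^{[r-i]}$ has length at most $r+2-i\le r+1$ (using $i\ge 1$). Hence $m\le r+1<r+2\le t$, contradicting $m\ge t$. It follows that no monotonic path of length $t$ or longer contains any vertex of the form $w_i^{[r-i]}$.

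The one step requiring a small idea is the middle one --- recognising that (P3) forces a $w$-containing monotonic path to start at a $w$-vertex, which is precisely what lets Lemma~\ref{lem:wPaths} be brought to bear; the rest is bookkeeping and the length inequality $t\ge r+2$, so I do not anticipate a genuine obstacle.
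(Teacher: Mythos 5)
Your proof is correct and follows essentially the same route as the paper's: both argue via (P3) that a monotonic path containing a $w$-vertex must begin at one, then apply Lemma~\ref{lem:wPaths} to bound its length by $r+1 < r+2 \le t$. You simply spell out the (P3) step in more detail than the paper does.
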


\begin{proof}
Since we have a powerful ordering, any monotonic path containing a vertex of the form $w_i^{[r-i]}$ must begin at a vertex of the form $w_j^{[r-j]}$ (by P3), and, by Lemma \ref{lem:wPaths}, this path has length at most $r+1$.  On the other hand, $\chi(G) = t - r \ge 2$, since $G$ contains an edge and $\chi(\myc(G))=\chi(G)+1$.  Hence any monotonic path containing a vertex of the form $w_i^{[r-i]}$ has length at most $r+1 < r+2 \le t$, and so no monotonic path of length $t$ can contain a vertex of the form $w_i^{[r-i]}$. 
\end{proof}

\begin{lemma}
\label{lem:PathLength}
If $G$ is any graph containing an edge and $\chi(\myc^r(G)) = t$, then a powerful ordering of the vertices of $\myc^r(G)$ contains no path of length longer than $t$.
\end{lemma}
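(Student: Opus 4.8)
The plan is to bound the length of an arbitrary monotonic path $P$ in a powerful ordering of $\myc^r(G)$. By \lref{tPaths}, such a $P$ of length $t$ or more contains no vertex of the form $w_i^{[r-i]}$, so all vertices of $P$ are of the form $i^W$ with $i \in V(G)$ and $W$ a word of length $r$ in $u,v$. Write $P = a_1^{W_1}, a_2^{W_2}, \ldots, a_\ell^{W_\ell}$. The strategy is to control separately (a) how much the word-exponents $W_k$ can change along $P$, and (b) how much the ``base'' colours $c(a_k)$ can increase, then combine these two counts.

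First I would handle the base colours. By \lref{w_i}(i), consecutive vertices $a_k^{W_k} \sim a_{k+1}^{W_{k+1}}$ force $a_k \sim a_{k+1}$ in $G$ (the $w$-case is excluded), and since $c$ is the $\chi(G)=t-r$ colouring induced by the powerful ordering (property P0), $c(a_k) \ne c(a_{k+1})$; moreover I claim the sequence $c(a_1), c(a_2), \ldots$ is such that the colours, where they do not strictly increase, are ``paid for'' by a strict change in the word. The cleanest way to see this: whenever $W_k = W_{k+1}$, property P1 applied to the pair $a_k^{W_k}, a_{k+1}^{W_k}$ (with $a_k^{W_k}$ before $a_{k+1}^{W_k}$ since $P$ is monotonic in the ordering) gives $c(a_k) \le c(a_{k+1})$, hence $c(a_k) < c(a_{k+1})$. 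So along any maximal run of consecutive vertices sharing the same word $W$, the base colours strictly increase, and such a run has length at most $\chi(G) = t-r$.

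Next I would bound the number of distinct words occurring along $P$, and here \lref{uAfterPositioni} is the key input. Using P2, the words are encountered in nondecreasing lexicographic order as we traverse $P$, so each distinct word is an interval of $P$. Now I track the positions of $u$'s. Let $s_k$ be the position of the leftmost $u$ in $W_k$ (set $s_k = \infty$, i.e. ``no $u$,'' which can happen only for the all-$v$ word, necessarily first). When the word changes from $W_k$ to $W_{k+1}$, \lref{uAfterPositioni} (applied with the $u$ of $W_k$ in some position $s>1$, noting position $1$ of $W_k$ cannot be the leftmost-$u$ case except possibly once at the very start) tells us $W_{k+1}$ has a $u$ in some position $k' < s$ where $W_k$ had a $v$; in particular $s_{k+1} < s_k$ strictly (or $W_k$ had its $u$ in position $1$, forcing us into the base of the path). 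Since there are only $r$ positions, this strictly-decreasing sequence of leftmost-$u$-positions shows at most $r$ word-changes occur along $P$ after the initial all-$v$ segment — giving at most $r+1$ distinct words. Combining: at most $r+1$ runs, each of length at most $t-r$, yields $\ell \le (r+1)(t-r)$, which is far weaker than what we want, so this crude product bound is not enough.

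The main obstacle, and the place I expect the real work to be, is that one must interleave the two counts rather than multiply them. The right bookkeeping is a potential/weight argument: assign to each vertex $a_k^{W_k}$ of $P$ the quantity $\Phi_k = c(a_k) + (\text{number of }u\text{'s in }W_k\text{ lying in positions} \le s_k) \cdot(\text{something})$ — more precisely, I would show that $c(a_k) + (\text{index measuring how deep the leftmost } u \text{ of } W_k \text{ is})$ is strictly increasing along $P$, using P1 for same-word steps and \lref{uAfterPositioni} for word-changing steps, with the latter lemma guaranteeing that a step which fails to increase $c$ must push a $u$ strictly leftward. Since $c$ ranges over $\{1,\ldots,t-r\}$ and the leftmost-$u$ position ranges over $r$ values, a strictly increasing integer potential taking at most $(t-r) + r = t$ distinct values bounds $\ell$ by $t$. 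Getting the potential to be genuinely monotone — in particular checking the boundary step from the all-$v$ word and the step out of a word whose only $u$ is in position $1$ — will require care, and is essentially a more careful rerun of the argument already carried out in the proof of \lref{wPaths}, now without a $w$-vertex to anchor the path.

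\begin{proof}[Proof sketch / proposal]
By \lref{tPaths}, it suffices to show that every monotonic path $P$ in a powerful ordering of $\myc^r(G)$ whose vertices are all of the form $i^W$ ($i \in V(G)$, $W$ a length-$r$ word) has length at most $t$. Write $P = a_1^{W_1}, \ldots, a_\ell^{W_\ell}$. By P2 the words appear in nondecreasing lexicographic order along $P$, so each word occupies a contiguous block. Within a block (consecutive equal words) P1 forces the base colours $c(a_k)$ to strictly increase, so each block has length at most $\chi(G) = t-r$. By \lref{w_i}(i) and \lref{uAfterPositioni}, each transition between consecutive distinct words moves the leftmost $u$ strictly to the left (or exits via a position-$1$ $u$), so at most $r+1$ distinct words occur. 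Refining this, define the potential $\Phi_k$ to be $c(a_k)$ plus the number of positions to the left of (and including) the leftmost $u$ of $W_k$; P1 shows $\Phi$ strictly increases within a block, and \lref{uAfterPositioni} shows $\Phi$ strictly increases across a block boundary. Since $\Phi_k \in \{1, \ldots, t\}$, we get $\ell \le t$, completing the proof.
\end{proof}
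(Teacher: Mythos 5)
Your two counting ingredients are the right ones, but the potential $\Phi_k = c(a_k) + (\text{position term})$ you use to combine them is not monotone, and this is precisely the step you yourself flagged as ``the real work.'' Property P1 constrains colours only between vertices carrying the \emph{same} exponent word, so when the word changes from $W_k$ to $W_{k+1}$ the colour $c(a_{k+1})$ can drop from $t-r$ all the way to $1$, a loss of $\chi(G)-1$, while the position term gains at most $r$ (and in general only $1$). Concretely, in $\myc(K_5)$ (so $r=1$, $t=6$) take the monotonic path $a_1^v,a_2^v,\dots,a_5^v,a_1^u$ in a powerful ordering with $c(a_i)=i$: your $\Phi$ reads $1,2,3,4,5,2$. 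So the assertion that \lref{uAfterPositioni} makes $\Phi$ strictly increase across a block boundary fails whenever $\chi(G)\ge 3$, and your fallback bound $(r+1)(t-r)$ is, as you concede, too weak.

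The observation that rescues your approach is \lref{w_i}(ii): two vertices whose words share a $u$ in the same position are never adjacent, so a word containing a $u$ labels at most \emph{one} vertex of $P$ — only the all-$v$ word $v^r$ can form a block of length greater than $1$. Hence $P$ is an initial monotonic path inside the copy of $G$ carried by $v^r$ (of length at most $\chi(G)=t-r$ by P0 and P1) followed by single vertices whose least $u$-position (least in the paper's right-to-left indexing; this, not the ``leftmost'' $u$, is the quantity \lref{uAfterPositioni} controls) strictly decreases, so at most $r$ of them; the total is at most $(t-r)+r=t$ with no potential function needed. (Equivalently, the two-phase potential equal to $c(a_k)$ on the $v^r$ block and to $(t-r)+(r+1-s_k)$ afterwards is monotone; yours is not.) Note finally that the paper proves the lemma by an entirely different route — induction on the Mycielski level, stripping the final letter $v$ from every word to produce a too-long monotonic path one level down — so your repaired argument would stand as a genuine direct alternative.
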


\begin{proof}
We proceed by induction on $d$ (at most $r$), where the induction statement is that any powerful ordering of $\myc^d(G)$ with respect to $c$ contains no monotonic path of length $t-r+d$.
First, $G = \myc^0(G)$ contains no monotonic path of length longer than $t-r$ since $\chi(G) = t-r$ and a powerful ordering of $\myc^0(G)$ groups the vertices by colour class by P0.

Now assume for some $0 \le d \le r-1$ that a powerful ordering of the vertices of $\myc^d(G)$ (with respect to $c$) contains no monotonic path of length $t-r+d$ and suppose that there is a powerful ordering of the vertices of $\myc^{d+1}(G)$ (with respect to $c$) that contains a monotonic path $P$ of length $m > t - r + d + 1$.  By Lemma \ref{lem:tPaths}, $P$ cannot contain any vertex of the form $w_i^{[d+1-i]}$ since $\chi(\myc^{d+1}(G)) = t - r + d + 1$.  Let the vertices of $P$ be $i_1^{W_1}, i_2^{W_2}, \ldots, i_m^{W_m}$ when taken monotonically in the powerful ordering, where each word $W_i$ has length $d+1$.  By P2, $W_i \le W_j$ in the lexicographic order for all $i < j$. Since $i_{m-1}^{W_{m-1}} \sim i_m^{W_m}$, either the rightmost letter of $W_m$ is $v$, in which case the rightmost letter of all $W_i$ is $v$, or the rightmost letter of $W_m$ is $u$, in which case the rightmost letter of $W_{m-1}$ is $v$ by rules R2 and R3, and hence the rightmost letter of all $W_i$ is $v$ for all $i \le m-1$ by lexicographic order.  Define $W_i'$ to be the word of length $d$ obtained by deleting the final letter $v$ from each $W_i$, $i \le m-1$.  Note that, since $i_j^{W_j} \sim i_{j+1}^{W_{j+1}}$ and the rightmost letter of $W_j$ is $v$ for $1 \le j \le m - 1$, by the definition of the Mycielskian, $i_1^{W_1'}, \ldots, i_{m-1}^{W_{m-1}'}$ is a path in $\myc^d(G)$.  Moreover, by definition of powerful order, lexicographically $W_1' \le W_2' \le \ldots \le W_{m-1}'$, and so $i_1^{W_1'}, \ldots, i_{m-1}^{W_{m-1}'}$ is in fact a monotonic path of length $m-1 > t-r+d$ in the powerful ordering of $\myc^d(G)$ inherited from the powerful ordering of $\myc^{d+1}(G)$, 
%(see Remark \ref{remark:inherit})
a contradiction to the inductive hypothesis.  Therefore, a powerful ordering (with respect to $c$) of the vertices of $\myc^r(G)$ contains no path of length longer than $t$, as desired.  
\end{proof}

%\begin{proposition}
%\label{prop:CircAltOddMycielski}
%Let $G$ be a graph containing an edge with no odd cycles of length less than or equal to $t$, where $t$ is odd, and suppose $\chi(\myc^r(G)) = t.$  Then $\ca(\myc^r(G)) < t.$
%\end{proposition}

Finally we can prove Theorem~\ref{thm:oddgirth}, which we restate here for convenience.

\begin{quote}
\noindent\textbf{Theorem~\ref{thm:oddgirth}:} Suppose $G$ is nonempty with $\chi(\myc^r(G))=t$, where $t$ is odd, and the length of the shortest odd cycle of $G$ is strictly greater than $t$. Then 
$\ca(\myc^r(G))<t$.
\end{quote}

\begin{proof}[Proof of Theorem~\ref{thm:oddgirth}]
Suppose on the contrary that $\ca(\myc^r(G))\ge t$.  Then there exists a monotone cycle of length at least $t$ in any circular ordering of $V(\myc^r(G))$; in particular, in a powerful ordering of the vertices of $\myc^r(G)$, which exists by Lemma \ref{lemma:powerful}, there must exist a monotonic path $P$ of length at least $t$ such that the first and last vertices are adjacent.  By Lemma \ref{lem:PathLength}, $P$ has length exactly $t$, and by Lemma \ref{lem:tPaths}, $P$ contains no vertex of the form $w_j^{[r-j]}$.  
Let the vertices of $P$ be $i_1^{W_1}, \ldots, i_t^{W_t}$ when taken monotonically in the powerful ordering.  (Note that the vertices $i_1, \ldots, i_t$ are not necessarily distinct.)  By Lemma \ref{lem:w_i}, 
we see that this implies that $i_1 \sim i_2 \sim \cdots \sim i_t \sim i_1$ in $G$, and, in fact, $(i_1, i_2,\ldots, i_t, i_1)$ is a closed walk of length less than $t$ in $G$. %$t$ in $G$.  
Since a closed walk of odd length must contain an odd cycle, and the length of the shortest odd cycle of $G$ is strictly greater than $t$, we have a contradiction. Therefore, $\ca(\myc^r(G)) < t$.
%However, since there are no odd cycles of length less than or equal to $t$ in $G$, this is impossible.  Therefore, $\ca(\myc^r(G)) < t$.
\end{proof}

The following is a consequence of the Zig-zag Theorem in~\cite{Simonyi:2006aa}. We use it to prove a corollary to Theorem~\ref{thm:oddgirth} and also Theorem~\ref{thm:camyc}.

\begin{theorem}[Corollary of the Zig-zag Theorem in~\cite{Simonyi:2006aa}]\label{thm:zigzag}
Let $c$ be an arbitrary proper colouring of $\myc^r(G)$ by an arbitrary number of colours, where the colours are linearly ordered. 
Let $t=\chi(\myc^r(G))=\chi(G)+r$. Then  $\myc^r(G)$ contains a complete bipartite subgraph $K_{\lceil \frac{t}{2}\rceil,\lfloor \frac{t}{2}\rfloor}$ such that $c$ assigns distinct colours to all $t$ vertices of this subgraph and these colours appear alternating on the two sides of the bipartite subgraph with respect to their order.
\end{theorem}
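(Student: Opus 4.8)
The plan is to derive this as a corollary of the Zig-zag Theorem of Simonyi--Tardos, which is the standard topological result about local colourings of Mycielski-type (more generally, "topologically $t$-chromatic") graphs. I would first recall the precise statement of the Zig-zag Theorem in the form it is usually quoted: if a graph $H$ is topologically $t$-chromatic (meaning a suitable box complex or neighbourhood complex has nonzero connectivity-type invariant forcing $\chi\geq t$), then for any proper colouring $c$ of $H$ by linearly ordered colours, $H$ contains a complete bipartite subgraph $K_{\lceil t/2\rceil,\lfloor t/2\rfloor}$ whose $t$ vertices all receive distinct colours, and these colours alternate along the two sides when read in increasing order (the "zig-zag" pattern). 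So the only real content here is to verify that $\myc^r(G)$ is topologically $(\chi(G)+r)$-chromatic, i.e. that each application of the Mycielski functor raises the relevant topological lower bound by exactly one, in step with $\chi$.

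The key steps, in order, are as follows. First I would cite the fact (due to Stiebitz, and reproved via box complexes by Csorba and others) that the Mycielski construction raises the topological lower bound on the chromatic number by one: if $\mathrm{coind}(B_0(H))+2 \ge t$ (or whichever connectivity invariant one uses), then the same invariant of $\myc(H)$ is at least $t+1$. Iterating, $\myc^r(G)$ has topological lower bound at least $(\text{top. bd. of }G)+r \ge \chi(G)+r = t$, using that the topological bound for any graph is at most $\chi$ and that here equality propagates because $\chi(\myc^r(G)) = \chi(G)+r$ exactly, so the topological bound is squeezed to equal $t$. Second, I would invoke the Zig-zag Theorem (Simonyi--Tardos) for topologically $t$-chromatic graphs, applied to $H = \myc^r(G)$ with the given linearly ordered proper colouring $c$: this immediately yields the complete bipartite subgraph $K_{\lceil t/2\rceil,\lfloor t/2\rfloor}$ with all $t$ vertices distinctly coloured and the alternation property. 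Third, I would remark that $c$ is permitted to use arbitrarily many colours (not just $t$): the Zig-zag Theorem has no restriction on the size of the colour palette, only that the colours be linearly ordered, so nothing needs adjustment.

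The main obstacle is the first step: carefully justifying that the topological lower bound behaves additively under $\myc$ and matches $\chi$ at every stage. One has to pick a consistent topological invariant (the coindex of the box complex $B_0$, or equivalently $\mathrm{ind}$ of the hom-complex $\mathrm{Hom}(K_2,-)$ shifted appropriately) for which both (a) the Mycielski shift-by-one lemma is available in the literature and (b) the general inequality $\text{invariant}(H)\le\chi(H)$ holds; then the chain $t = \chi(G)+r = (\text{invariant of }G)+r \le \text{invariant of }\myc^r(G) \le \chi(\myc^r(G)) = t$ forces equality throughout, so $\myc^r(G)$ is topologically $t$-chromatic. Everything after that is a direct appeal to Simonyi--Tardos. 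I would therefore keep the proof short: state that $\myc^r(G)$ is topologically $(\chi(G)+r)$-chromatic by the iterated Mycielski property of the topological bound, and then quote the Zig-zag Theorem verbatim to conclude.
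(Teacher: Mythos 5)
The paper itself offers no proof of this statement---it is quoted as a black-box corollary of Simonyi--Tardos---so your plan of reducing to the Zig-zag Theorem via topological $t$-chromaticity is exactly the intended route, and your second and third steps (applying the Zig-zag Theorem once topological $t$-chromaticity is known, and observing that the palette size is irrelevant) are fine. The problem is in your first step, which you correctly flag as the main obstacle but then do not actually close. Your chain
\[
t=\chi(G)+r=\bigl(\text{invariant of }G\bigr)+r\le \text{invariant of }\myc^r(G)\le\chi(\myc^r(G))=t
\]
is not a squeeze: the two inequalities only yield $\text{invariant}(G)\le\chi(G)$, which is the trivial direction. The equality $\text{invariant}(G)=\chi(G)$ (e.g.\ $\mathrm{coind}(B_0(G))+2=\chi(G)$) is an \emph{input} to the argument, not a consequence of it, and it fails for general $G$. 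Indeed the statement as printed is false for arbitrary $G$: take $r=0$ and $G$ a $4$-chromatic graph of girth $5$ (such graphs exist, e.g.\ the Brinkmann graph); then $t=4$ and the conclusion demands a rainbow $K_{2,2}$, but $G$ contains no $4$-cycle at all. So no proof of the literal statement can exist, and any correct derivation must add the hypothesis that $G$ is topologically $\chi(G)$-chromatic.

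That missing hypothesis is harmless for the paper's applications---the base graphs actually used are $K_n$ and $C_{2n+1}$, for which the box-complex coindex bound is tight, and then Stiebitz's/Csorba's suspension result $\mathrm{coind}(B_0(\myc(H)))\ge\mathrm{coind}(B_0(H))+1$ does propagate tightness up the Mycielski tower exactly as you describe. So to repair your write-up you should (a) state the tightness assumption on $G$ explicitly (or restrict to the cases $G=K_n$, $G=C_{2n+1}$), and (b) replace the circular squeeze by: $\mathrm{coind}(B_0(G))+2=\chi(G)$ by hypothesis, hence $\mathrm{coind}(B_0(\myc^r(G)))+2\ge\chi(G)+r=\chi(\myc^r(G))$, with equality since the topological bound never exceeds $\chi$. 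Everything else in your proposal then goes through by a verbatim appeal to the Zig-zag Theorem.
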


\begin{proof}[Proof of Theorem~\ref{thm:camyc}]
Let $t=3+2r$ be the chromatic number of $\myc^{2r}(C_{2n+1})$.
First note that $\omega(\myc^{2r}(C_{2n+1}))=2$ as $\myc^{2r}(C_{2n+1})$ has no triangles.
Consider any circular ordering of the vertices of $\myc^{2r}(C_{2n+1})$. Pick any linear ordering induced by the circular ordering, and let $c$ be the colouring induced by this linear ordering. Then by Theorem~\ref{thm:zigzag}, there is a monotonic cycle of length $t-1=2r+2$ in the circular ordering of $\myc^{2r}(C_{2n+1})$.
Hence $\ca(\myc^{2r}(C_{2n+1})) \ge 2r+2$. By Theorem~\ref{thm:oddgirth}, 
\[
\ca(\myc^{2r}(C_{2n+1})) < \chi(\myc^{2r}(C_{2n+1})) = 2r+3.\]
The result follows.
\end{proof}

\begin{remark}
Theorem~\ref{thm:camyc} can also be proved as follows: by \cite[Corollary 4.1]{Zhu:2001aa}, $\chi_c(C_{2n+1}) = 2 + 1/n$.  By applying both \cite[Theorem 4.3]{Zhu:2001aa} and Theorem \ref{th:caVScirc}, we see that $\ca(\myc^{2r}(C_{2n+1})) \le \chi_c(\myc^{2r}(C_{2n+1})) < \chi(\myc^{2r}(C_{2n+1}))$.  Combined with Theorem \ref{thm:zigzag}, the result follows. 
\end{remark}

%\bibliographystyle{plain}
%\bibliography{Circbib}
\end{document}